\numberwithin{equation}{section}
\newcommand{\ie}{\textit{i.e.}\@\xspace}
\newcommand{\cf}{\textit{i.e.}\@\xspace}
\numberwithin{equation}{section}
\theoremstyle{plain}
\newtheorem{theorem}{Theorem}[section]
\newtheorem{conjecture}[theorem]{Conjecture}
\theoremstyle{remark}
\newtheorem{remark}[theorem]{Remark}
\theoremstyle{definition}
\newtheorem{question}[theorem]{Question}
\newcommand{\cH}{\mathcal{H}}
\newcommand{\cL}{\mathcal{L}}
\newcommand{\cM}{\mathcal{M}}
\newcommand{\bbR}{\mathbb{R}}
\newcommand{\bbE}{\mathbb{E}}
\newcommand{\bbN}{\mathbb{N}}
\newcommand{\bbP}{\mathbb{P}}
\newcommand{\bi}{\mathbf{i}}
\newcommand{\bj}{\mathbf{j}}
\newcommand{\eps}{\varepsilon}
\newcommand{\dimh}{\dim_{\mathrm{H}}}
\newcommand{\dima}{\dim_{\mathrm{A}}}
\newcommand{\dimb}{\overline{\dim}_{\mathrm{B}}}
\newcommand{\dimaspec}[1]{{\dim}_{\mathrm{A}}^{#1}}
\newcommand{\dimPhi}[1]{{\dim}^{#1}}
\DeclareMathOperator{\hdim}{dim_H}
\DeclareMathOperator{\bdim}{dim_B}
\DeclareMathOperator{\Bin}{Bin}
\DeclareMathOperator{\diam}{diam}
\DeclareMathOperator{\proj}{proj}
\DeclareMathOperator{\por}{por}
\begin{document}

\title{Recent Progress on Fractal Percolation}

\author{Istv\'an Kolossv\'ary}
\address[Istv\'an Kolossv\'ary]
{
  HUN-REN Alfr\'ed R\'enyi Institute of Mathematics\\
  Reáltanoda u.\ 13--15\\
  HU 1053 Budapest\\
  Hungary
}
\email{kolossvary.istvan@renyi.hu}

\author{Sascha Troscheit}
\address[Sascha Troscheit]
{
  Department of Mathematics\\
  University of Uppsala\\
  Box 480\\
  753 24 Uppsala, Sweden
}
\email{sascha.troscheit@math.uu.se}

\subjclass[2020]{ Primary 28A80; Secondary 60J80, 
60K35, 
60D05, 
37C45.}
\keywords{fractal percolation, dimension theory, connected components, percolation threshold}
\date{\today}

\begin{abstract}
This is a survey paper about the fractal percolation process, also known as Mandelbrot percolation.
It is intended to give a general breadth overview of more recent research in the topic, but also
includes some of the more classical results, for example related to the connectivity properties.
Particular emphasis is put on the dimension theory of the limiting set and also on the geometry of
the non-trivial connected components in the supercritical regime. In particular, we show that both the Assouad
spectrum and intermediate dimensions of the non-trivial connected components are constant equal to
its box dimension despite its Hausdorff, box and Assouad dimensions known to being distinct.
\end{abstract}

\maketitle

\section{Introduction}\label{sec:Intro}

Fractal percolation is a model of fractality that is stochastically self-similar. In its most basic
form it is obtained by iteratively dividing the square $[0,1]^2$ into $2\times 2 = 4$ congruent
subsquares and keeping each subsquare independently with probability $p\in(0,1)$. These subsquares
are then divided again and kept independently with probability $p$ ad infinitum. For reasonable
choices of $p$ (here $p>\tfrac14$) there exists positive probability that we end up with a limiting
set, called \emph{Mandelbrot percolation}.
See Figure~\ref{fig:samplePercolation} for realisations with different $p\in(\tfrac14, 1)$.
This model of sets with stochastic self-similarity was first proposed by Mandelbrot in the context of
fluid turbulence \cite{Mandelbrot_CanonicalCurdling74,Mandelbrot_BookFravtalGeomNature82}, and over
the past few decades much research has gone into studying extensions of this model, generally
referred to as \emph{fractal percolation}. The interest in fractal percolation arises in a multitude
of contexts, such as topology (connectedness), metric geometry (dimension), geometric measure theory
(projections, slices), or statistical physics (percolation), amongst many other.

Given the interest of the mathematical community, it comes to no surprise that there are already a
wealth of surveys available, not least in the Fractal Geometry and Stochastics conference
proceedings. Of note are the surveys by
Chayes \cite{Chayes_FGSSurvey95} from 1995, as well as those of Rams and Simon
\cite{RamsSimon_FractalPercSurvey14} (2014) and Simon and V\'ag\'o
\cite{SimonVago_FractalPercSurvey18} (2018). We refer the reader to these articles for some of the
more classical literature and highlight in this survey instead the general breadth of recent
research.
More recently (2025) Orgov\'anyi and Simon \cite{OrgovanyiSimon_Survey25} wrote a
survey on fractal percolation, focusing on projections and non-empty interior.
In this survey we focus instead on recent progress on the dimension theory as well as the connectedness
properties of the resultant percolation set. We also determine the
intermediate dimension and Assouad spectrum of the complement of the totally disconnected part of Mandelbrot
percolation.
\begin{figure}[t]
  \begin{tikzpicture}
    \filldraw (-.03,-0.03) rectangle (3.9+.03,3.9+.03);
    \node () at (1.95,1.95) {\includegraphics[width=3.9cm]{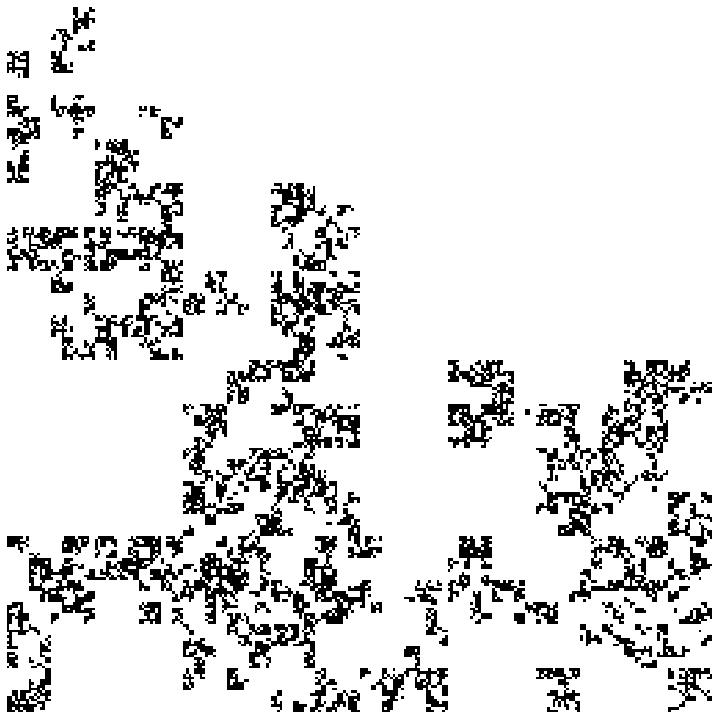}};
  \end{tikzpicture}
  \hfill
  \begin{tikzpicture}
    \filldraw (-.03,-0.03) rectangle (3.9+.03,3.9+.03);
    \node () at (1.95,1.95) {\includegraphics[width=3.9cm]{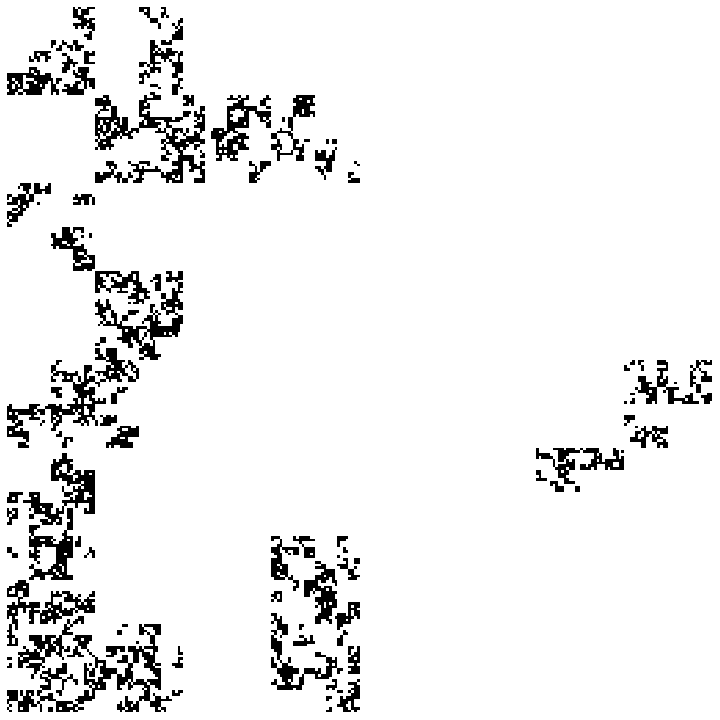}};
  \end{tikzpicture}
  \hfill
  \begin{tikzpicture}
    \filldraw (-.03,-0.03) rectangle (3.9+.03,3.9+.03);
    \node () at (1.95,1.95) {\includegraphics[width=3.9cm]{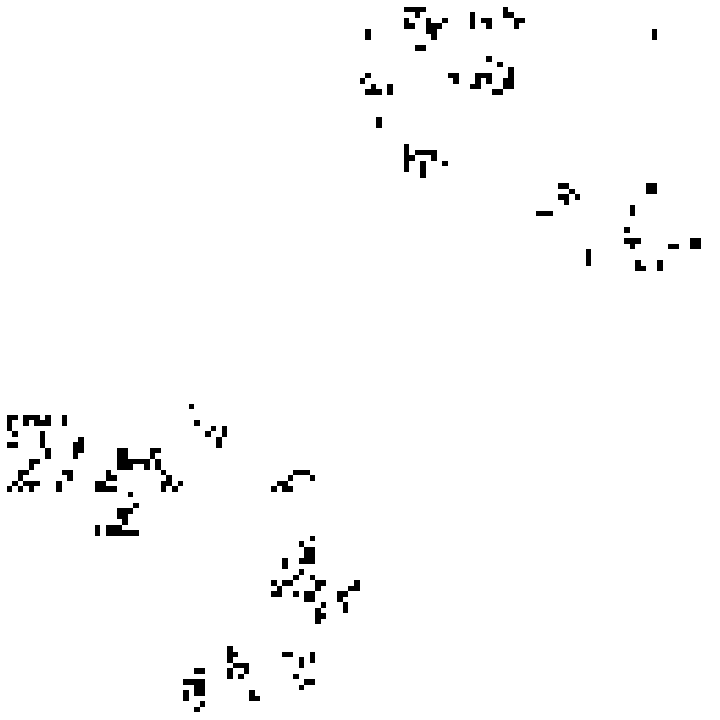}};
  \end{tikzpicture}
  \hfill
  \begin{tikzpicture}
    \filldraw (-.03,-0.03) rectangle (3.9+.03,3.9+.03);
    \node () at (1.95,1.95) {\includegraphics[width=3.9cm]{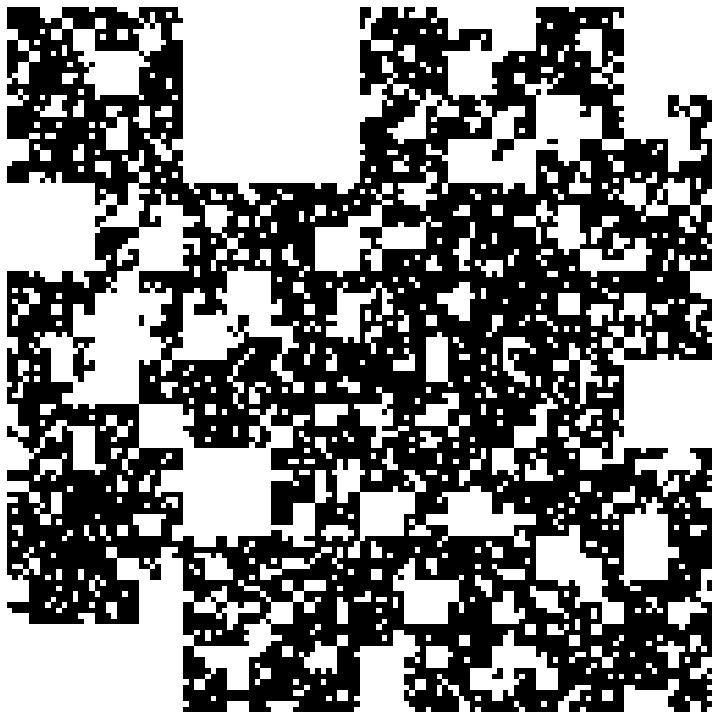}};
  \end{tikzpicture}
  \hfill
 
  \caption{Realisations of fractal percolation with $N=2$, $d=2$, $p=\tfrac34$ (left, centre left),
  $p=\tfrac12$ (centre right), $p=\tfrac78$ (right).}
  \label{fig:samplePercolation}
\end{figure}

\subsection{The model}
We first set up the standard Mandelbrot percolation, as informally described in the introduction. We
then generalise it slightly to a model that is sufficient for handling the generality of most
of the remaining article.
Later, we consider more general constructions, but to avoid cluttering up notation, we
leave setting up proper notation for these until then.

\subsubsection{Mandelbrot percolation}\label{sec:MandelbrotPerc}
The classical Mandelbrot percolation has three parameters. 
Let $d\geq 1$ be the ambient space dimension and fix $N\geq2$, the number of subdivisions.
Further let $p\in[0,1]$. 
Let $[0,1]^d\subset \bbR^d$ be the unit cube. Let $X$ be a random variable that takes value $1$ with
probability $p$ and value $0$ with probability $1-p$.
Let $\Sigma_k$ be the collection of all congruent subcubes of $[0,1]^d$ with sidelength $1/N^k$ to get
a partition of the space with $N^{dk}$ elements. 
Let $X_{\bi}:\Sigma_k \to \{0,1\}$ be independent random variables, indexed by $\bi\in\Sigma_k$ that have
the same distribution as $X$.
Write $Q_k=\{\bi\in \Sigma_k \mid X_{\bi} = 1\}\subseteq \Sigma_k$ for random collection of cubes that are
each kept with probability $p$ independently.
For the remainder, we will ignore the underlying probability space $\Omega$, but it can be modelled as a
countable product space indexed by the cubes $\Sigma_k$, $k\in \bbN$.
We define the limit set $F(\omega) = \bigcap_{k=0}^\infty Q_k(\omega)$ to be the random intersection of
the surviving subcubes. 
Here, as later, we will suppress the underlying realisations, but stress that the sets depend on
some realisation $\omega\in\Omega$. We will denote the induced probability measure on $\Omega$ by
$\mathbb{P}$, or $\mathbb{P}_p$, if we want to highlight its dependence on the construction parameter
$p$.

The model introduced here is fairly straightforward to analyse and one can see that the cardinality
of $\#Q_k$ has binomial distribution $\Bin(N^{dk},p)$. For a point to survive in the limit set $F$,
it must be retained at every step $k$ in the process. A moment's thought gives that the number of
subcubes $\bi j \in Q_{k+1}$, where $\bi \in Q_k$ has distribution $\Bin(N^d,p)$. In fact, this is
an offspring distribution and the number of sets $Z_n$ in $F_n\coloneqq\bigcap_{k=1}^n Q_k$ is a simple Galton--Watson
process with that offspring distribution.
Standard techniques now imply that the process is critical when $p = N^{-d}$ and the process dies
out, leaving $F = \varnothing$, almost surely if $p\leq N^{-d}$.
When $p>N^{-d}$, the set $F_n$ contains in expectation $m^k=(pN^{-d})^k$ subcubes and normalising
the Galton-Watson process $W_n = m^{-n}Z_n$ one obtains an $L^2$ martingale. 
Using martingale convergence theorems, one obtains $W_n \to W \in [0,\infty)$ almost surely, where $W=0$
represents extinction. The probability of extinction can also be computed and is given by the unique $q\in
(0,1)$ such
that
\begin{equation}\label{eq:10}
q = \sum_{k=0}^{N^d}\binom{N^d}{k} p^k (1-p)^{N^d-k} q^k =(1-p+pq)^{N^d}.
\end{equation}

\subsubsection{General Construction}\label{sec:generalConstr}
To arrive at a much more general model, we first abstract the sequence of nested cubes to codings
and construct a symbolic percolation set first. We then project this symbolic set into the geometry
of $\bbR^d$, which allows broad flexibility in how we map the symbolic set into Euclidean space. 

From now we write $\Sigma_k = \{1,\dots, K\}^k$ for the symbolic space consisting of $K^k$ sequences
of length $k$ and $\Sigma = \{1,\dots,K\}^{\bbN}$ for the collection of all infinite words.
The set $\Sigma_0=\{\emptyset\}$ contains the empty word $\emptyset$ only and we
write $\Sigma_* = \bigcup_k \Sigma_k$ for the set of all finite words. One may think of $\Sigma_*$
as the $K$-ary tree with boundary given by $\Sigma$.
We define a projection $\pi:\Sigma_k\to\bbR^d$ in terms of compositions of functions that depend on
the coding.
Let $f_i:[0,1]^d\to [0,1]^d$ be a strict contraction for each $i\in\Sigma_1$. We write $f_{\bi} =
f_{i_1}\circ f_{i_2} \circ \dots \circ f_{i_k}$ for words $\bi=i_1 i_2 \dots i_k \in\Sigma_k$.
Given an infinite word $\bi\in\Sigma$, we define
\[
  \pi(\bi) = \lim_{n\to\infty} f_{\bi|_n}(x) = \lim_{n\to\infty} f_{i_1}\circ\dots\circ f_{i_n}(x),
\]
where $x\in [0,1]^d$ is arbitrary and can be fixed to $x=0$ without loss of generality.
For finite words, we define $\pi(\bi)$ to be the image of the unit cube under the composition of
mappings. That is, for $\bi\in\Sigma_k$,
\[
  \pi(\bi)  = f_{\bi}([0,1]^d) = f_{i_1} \circ f_{i_2} \circ \dots \circ f_{i_k}([0,1]^d)
\]
Similar to above, we let $X_{\bi}$ be a random variable associated with $\bi\in\Sigma_*$ that takes
value 1 with probability $p_{\bi}\in[0,1]$ and $0$ with probability $1-p_{\bi}$, independently of all other
$\bj\neq\bi$ in $\Sigma_*$.
Analogously, we define $Q_k = \{\bi\in\Sigma_k : X_{\bi}=1\} \subseteq \Sigma_k$ to be the retained
codings at construction level $k$. The projection $\pi$ maps the symbolic space
into Euclidean space and we define 
\begin{equation}\label{eq:11}
  F_n = \bigcap_{k=0}^n \bigcup_{\bi\in Q_k} \pi(\bi) =
\bigcap_{k=0}^n \pi (Q_k)
\quad
\text{and}
\quad
F = \bigcap_{n=0}^\infty F_n = \bigcap_{k=0}^\infty \pi(Q_k).
\end{equation}
This more general approach through mappings, called an iterated function system (IFS), allows for
much greater generality. The mappings $f_i$ can now be any tiling of the original cube, allow for
overlaps, as well as doing percolation on Cantor sets and other dynamical attractors.
We refer to this more general model as \emph{fractal percolation} to emphasise this generality.
While we will generally suppress the underlying probability space $\Omega$, one can easily verify
that it is obtained by the countable product of the simple  indicators $X_{\bi}$ over
$\bi\in\Sigma_*$. This general percolation model was, to the best of our knowledge, first described
by Falconer and Miao \cite{FalconerMiao10_RandomSubsets} where they investigated projection
questions of fractal percolation. 
At this point we also mention that fractal percolation is a special case of stochastically
self-similar sets. These sets were first, independently, introduced in the 80s by
Falconer~\cite{Falconer1986_RandomFractals}, Graf~\cite{Graf87_StatSelfSimilar}, also
Mauldin and Williams~\cite{MauldinWilliams86_RandomRecursive}. More recent extensions of
these random recursive constructions are beyond the scope of this survey. We refer
to~\cite{Durand_RndFractalsTreeIndexedMCs,LiuWenWu_GenRandRecursive} and references therein for
additional background.

A few particular examples are the following. 
\begin{enumerate}
\item The Mandelbrot percolation of \cref{sec:MandelbrotPerc} can be recovered from this general
  model by letting $f_{i}$, $i\in \{1,\dots, N^d\}$ be the homotheties that map the unit cube
  $[0,1]^d$ into the $N^d$ congruent subcubes of sidelength $1/N$ parallel to the coordinate-axis
  and setting $p_{\bi} = p$ for all $\bi\in\Sigma_*$. 
\item By choosing $p_{\bi}=0$ for some $\bi\in\Sigma_*$, one can obtain a random subset of a
  deterministic fractal. For example, to obtain the \textit{Random Sierpi\'nski Carpet} studied by
  Dekking and Meester~\cite{DekkingMeester_StructureFractalPerc} consider the IFS defining
  Mandelbrot percolation for $d=2$ and $N=3$ but leave out the map which takes the unit square into
  the middle square. If this map corresponds to symbol $j\in \Sigma_1$, then this is equivalent to
  setting $p_{\bi j}=0$ for all $\bi\in\Sigma_*$. 
\item\label{ex:fatperc} Alternatively, one can also vary the retention probability by iteration
  level, \ie given a sequence $p^{(n)}$, we set $ p_{\bi}=p^{(n)}$ for all $\bi\in\Sigma_n$. It is
  immediate that the expected Lebesgue measure of $F$ is the product $\prod_{n=1}^\infty p^{(n)}$.
  If this product is strictly positive and $p^{(n)}$ is non-decreasing,  Broman \textit{et al.}
  coined it \textit{fat fractal percolation} in~\cite{BromanEtal_FatFractalPerc},
  see~\cref{thm:FatPerc} for results in this setting. 
\item Additional spatial in-homogeneity can be introduced by considering not just similarity
  mappings. For example, assume the IFS consists of contracting strictly self-affine maps of the
  form $f_i(x) = M_i x +t_i$ where the diagonal matrices $M_i$ and the translations $t_i$ are chosen
  such that $\{\pi(i):\, i\in\Sigma_1\}$ tile the unit cube. These planar carpet and higher
  dimensional sponge models have an extensive literature in the deterministic setting, but have also
  attracted considerable attention in the fractal percolation setting, see~\cref{sec:DimStochSelfAffine} for details.
\end{enumerate}
For simplicity, we chose the same  underlying symbolic space but many generalisations are possible.
One could allow the number of
children of a vertex $\bi$ of the tree $\Sigma_*$ to depend on $\bi$ and consequently also let the
corresponding maps depend on $\bi$. This direction is beyond the scope of this article but we refer
the reader to \cite{Chen17_RandomCantor} which studies a percolation model where the number of subcubes at
every level are distinct. Further, the introduction of dependencies leads to models such as
$V$-variable sets and code-tree fractals, that differ from fractal percolation by introducing
(well-motivated) construction level dependencies. 
These are studied, amongst others, in
\cite{BarnsleyHutchinsonStenflo2012,JarvenpaaJarvenpaaWuWu2017,Troscheit2021_CodeTrees} and we refer
the reader to these texts and references
therein.

\subsubsection{Spatial Invariance properties}
One crucial property of stochastically self-similar sets, and fractal percolation in particular, is
a spatial geometric invariance property. Assume that 
the first level images of the open unit cube $f_i((0,1)^d)$ are disjoint, then
these regions now have the same distribution as fractal percolation in the original cube, up to the
scaling provided by $f_i$. If $f_i$ are similarities we will refer to this as stochastically
self-similar, if they are affine, we say it is stochastically self-affine.

Other invariances in distribution exist, usually based on spatial Poisson point
processes, see for
example~\cite{BromanCamia_UniversalConnectivityFractalPercModels,
BromanEtal_FractalCylinderProcess,NacuWerner_RndSoupsCarpetsFractalDims}.
We will
briefly discuss spatially independent martingales of~\cite{ShmerkinSuomala_SIMartingales} in~\cref{sec:generalProcesses}.

\subsection{Overview of contents}

In~\cref{sec:BasicConnProp} classical results on the plane about the percolation threshold are
presented together with a list of recent advances in higher dimensions, on more general $N\times M$
grid and concerning fat fractal percolation. Different notions of dimension are introduced
in~\cref{sec:DimLimitSet}, where we summarise the dimension theory of Mandelbrot percolation and
highlight what is known about more general stochastically self-affine sets.
Section~\ref{sec:ConnectedComp} is dedicated to the geometry of the non-trivial connected components
in the supercritical regime. We give a sketch of the argument of Broman 
\textit{et al.}~\cite{BromanCamiaJoostenMeester_DimConnectedComponent} of a dimension gap between the Hausdorff
and box dimension of the connected components, moreover, we prove results about the Assouad spectrum
and intermediate dimensions of the connected components and survey works on the regularity
properties of the percolating paths. Section~\ref{sec:Dust} gives a short account of recent results
concerning projections, slices, visible part and porosity amongst others.
In~\cref{sec:Generalisations} we give a brief glimpse into various generalisations of the model. We
finish~\cref{sec:OPenProblems} with a few open problems.

\section{Connectivity properties}\label{sec:BasicConnProp}

For most of this section we assume the setting of Mandelbrot percolation introduced
in~\cref{sec:MandelbrotPerc}. We have already seen in~\cref{eq:10} that there is a positive
probability of the limit set $F$ being non-empty if $p>N^{-d}$. In what follows, we always assume
$p>N^{-d}$ and condition on the event that $F\neq \varnothing$.

An immediate observation is that any fixed countable set intersects $F$ with zero probability. This
is simply because each point of $[0,1]^d$ has at most $2^d$ codings on the symbolic space and the
projection of each coding is an element of $F$ with probability 0. 

At a heuristic level it is reasonable that $F$ becomes ``larger'' as the retention probability
grows. For $p$ closer to $N^{-d}$, one may expect $F$ to be totally disconnected. As $p$ grows,
connected components start to appear and for some $p$ closer to 1 there is a component which
connects opposing faces of $[0,1]^d$. It turns out that the transition in connectedness is actually
very abrupt. To formalise this, we introduce some notation. 

From now we consider the planar case, \ie when $d=2$. For any set $E\subseteq[0,1]^2$ and point
$x\in E$, let $E_x$ be the set of points $y\in E$ that are connected to $x$ in $E$.
We call $E_x$ the connected component of $x$ in $E$. We partition $E$ into  
\begin{equation*}
  E^d \coloneqq \{x\in E:\,  E_x=\{x\}\} \text{ and }  E^c\coloneqq  E\setminus E^d,
\end{equation*}
where $d$ refers to `dust' and $c$ to `connected'. We say that $F$ percolates if there exists a
component of $F^c$ which connects the left hand side $L=\{0\}\times[0,1]$ with the right hand side
$R=\{1\}\times[0,1]$ of $[0,1]^2$. This event can be written as the intersection of the events 
\begin{equation*}
B_n\coloneqq \{x\in F_n:\, x\text{ can be connected to } L \text{ and } R \text{ in } F_n\}.
\end{equation*}
Let $\theta(p)\coloneqq \mathbb{P}_p( F \text{ percolates})$. It is non-decreasing and upper
semi-continuous. We also say that arc percolation occurs in $F$ if there exists a continuous map
$\gamma: [0,1]\to[0,1]^2$, such that
\begin{enumerate}
\item $\gamma(0)\in L$ and $\gamma(1)\in R$,
\item $\gamma(t)\in F$ for all $t\in[0,1]$,
\item  $\gamma(t)\neq \gamma(s)$ for all $t\neq s$.
\end{enumerate}
Similarly, let $\theta^{\mathrm{arc}}(p)\coloneqq \mathbb{P}_p( \text{arc percolation occurs in } F
)$. Let us also introduce the critical probabilities
\begin{align*}
  p_d&\coloneqq \sup\{ p:\, \mathbb{P}_p( F \text{ is totally disconnected})=1\}; \\
  p_c&\coloneqq \inf\{ p:\, \theta(p)>0\}.
\end{align*}
Trivially $p_d\leq p_c$ and if $p_d=p_c$ then we write $p_0$ for the common value. The basic
connectivity properties of planar Mandelbrot percolation, due to Chayes \textit{et
al.}~\cite{Chayes2Durrett_FractalPercConnectivity_PTRF88},
Dekking--Meester~\cite{DekkingMeester_StructureFractalPerc} and
Meester~\cite{Meester_ConnectivityFractalPerc} can be summarised as follows.

\begin{theorem}[{\cite{Chayes2Durrett_FractalPercConnectivity_PTRF88, DekkingMeester_StructureFractalPerc,
  Meester_ConnectivityFractalPerc}}]\label{thm:basicConn}
For planar Mandelbrot percolation $p_d=p_c$ for all $N\geq 2$, and the common value $p_0\in(0,1)$.
Moreover, $\theta(p_0)>0$ and $\theta(p) = \theta^{\mathrm{arc}}(p)$ for all $p\in[0,1]$.

Conditioned on the event that $F\neq\varnothing$, almost surely the cardinality of $F^d$ is
uncountable regardless of $p$, furthermore, for $p\geq p_0$ the number of disjoint connected
components in $F^c$ is countable of which only finitely many intersect both $L$ and $R$.    
\end{theorem}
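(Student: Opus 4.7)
The plan is to establish the four assertions in turn, exploiting the stochastic self-similarity of $F$ at every step. For $p_0 \in (0,1)$, the lower bound is immediate since $F = \varnothing$ almost surely when $p \le N^{-2}$, giving $p_c \ge N^{-2} > 0$. For the upper bound $p_c < 1$ I would follow the renormalisation of Chayes--Chayes--Durrett: set $q_k(p) = \mathbb{P}_p(B_k)$. Stochastic self-similarity gives a recursion of the shape $q_{k+1}(p) \ge g_N\bigl(p, q_k(p)\bigr)$, where $g_N(p,s)$ is the probability that the $N\times N$ pattern of first-level surviving subcubes, each equipped independently with probability $s$ of carrying a sub-crossing, admits a chain crossing $[0,1]^2$ horizontally. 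Since $g_N(p,\cdot) \to 1$ as $p \to 1$, for $p$ close enough to $1$ the iterate $q_k(p)$ stays bounded below by a stable fixed point, and compactness of the decreasing family $(F_k)$ then yields $\theta(p) > 0$.

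The equality $p_d = p_c$ is the main obstacle and is where the arguments of Dekking--Meester and Meester are essential. The inequality $p_d \le p_c$ is immediate. For the reverse, I would observe that $\{F \text{ is totally disconnected}\}$ is a tail event of the independent family $(X_{\bi})_{\bi \in \Sigma_*}$, hence has probability $0$ or $1$ at each fixed $p$. If it fails to hold with probability $1$ for some $p < p_c$, then self-similarity ensures that each of the $N^2$ first-level subcubes contains a non-trivial connected component with positive probability, and a FKG/square-root style crossing lemma forces these components to link across shared boundaries with positive probability. A self-similar induction on scales then propagates these local linkages into a macroscopic $L$-to-$R$ crossing of positive probability, contradicting $p < p_c$.

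For $\theta(p_0) > 0$ I would use that $\theta(p) = \lim_n \mathbb{P}_p(B_n)$ is a decreasing limit of polynomials in $p$, hence upper semi-continuous; combined with monotonicity of $\theta$ and the definition $p_0 = p_c$, this gives $\theta(p_0) \ge \lim_{p \downarrow p_0} \theta(p) > 0$. The identity $\theta^{\mathrm{arc}} = \theta$ follows from an Arzel\`a--Ascoli type compactness argument applied to polygonal $L$-to-$R$ crossings of $F_n$ of uniformly bounded length: a Hausdorff-subsequential limit is a continuum inside $F$ meeting both $L$ and $R$, from which an injective parametrisation $\gamma$ is extracted by a standard loop-removal procedure.

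Finally, uncountability of $F^d$ can be obtained by embedding a Cantor set into $F^d$ via \emph{corner codings}: at every level restrict attention to two children whose images occupy opposite corners of their parent cube, so that distinct descendants are separated at scale $\asymp N^{-n}$ by gaps that the percolation removes entirely with a definite conditional probability at each step. With strictly positive probability conditional on $F \ne \varnothing$ the resulting binary subtree survives in full, and its continuum-many projected limit points are all isolated in $F$ by the uniform scale separation. That $F^c$ decomposes into countably many components with only finitely many crossing $L$ to $R$ follows by a compactness and separation argument: every non-trivial component inherits from the self-similar structure a minimal transversal scale at which it is already detectable, and at any fixed scale only finitely many disjoint such crossings of $[0,1]^2$ can coexist, so an infinite family would accumulate and contradict the discrete nature of the construction at that level.
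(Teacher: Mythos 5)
The paper itself does not give a proof of this theorem: it is a quoted survey result attributed to the cited works, so there is no internal proof to compare against. Judged on its own terms, the proposal correctly identifies the main players (renormalisation \`a la Chayes--Chayes--Durrett, RSW/square-root crossing arguments for $p_d=p_c$, semicontinuity for $\theta(p_0)>0$), but several steps would fail as stated.

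First, the uncountability of $F^d$ via ``corner codings'' does not work as written. The event that a full binary subtree of nested opposite-corner subcubes survives for all levels with the separating subcubes discarded at every level is an intersection over levels of independent events, the number of which grows like $2^n$ at level $n$; its probability is bounded by $q^{2^n-1}$ for some $q<1$ and therefore is zero, not positive. One needs a branching-process or Hausdorff-dimension argument (a supercritical Galton--Watson subtree of isolated cubes, or $\dimh F^d = \dimh F>0$) rather than asking for a single deterministic configuration to realise itself along an entire binary tree. Second, the Arzel\`a--Ascoli argument for $\theta^{\mathrm{arc}}=\theta$ is not available: crossings of $F_n$ do \emph{not} have uniformly bounded length, and indeed the paper records (Orzechowski, Chayes) that almost surely $F$ contains no rectifiable curve, so the hypotheses you need for equicontinuity are exactly what fails. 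Meester's argument is topological, extracting an arc from a planar continuum joining $L$ to $R$, not a limit of bounded-length polygonal paths.

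Third, the concluding compactness argument for only finitely many $L$-to-$R$ crossing components is not a general fact about compact sets; the set $(\{0\}\cup\{1/n : n\ge 1\})\times[0,1]$ is compact and has infinitely many pairwise disjoint components of diameter $1$ crossing the unit square. One must use the specific percolation geometry (crossings in $F_n$ occupying a definite number of retained cubes which are bounded in total, or the RSW machinery) to control the count. Finally, the opening claim that $\{F\ \text{totally disconnected}\}$ is a Kolmogorov tail event is unjustified, since whether $F$ is connected across the boundary of two adjacent level-$1$ cubes depends on the first-generation variables $X_{\bi}$; the $0$--$1$ dichotomy in $p$ comes from monotonicity and the RSW-type renormalisation in the cited works, not from a tail $\sigma$-algebra argument, and in fact the probability at $p=p_d$ is strictly between $0$ and $1$ for the complementary event.
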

The regimes $p<p_0$ and $p>p_0$ are generally referred to as the sub- and super-critical phases. In
each phase different questions about the finer geometric properties of $F$ are more relevant.
Section~\ref{sec:ConnectedComp} concentrates on the dimension theory of $F^c$ and the regularity
properties of percolating paths in the super-critical regime, while~\cref{sec:Dust} expands more on
problems relevant for the dust phase, such as projections.

We now make some remarks on different aspects of~\cref{thm:basicConn}.

\begin{itemize}
\item 
The contents of~\cref{thm:basicConn} were recently generalised by Falconer and
Feng~\cite{FalconerFeng_FractalPercBMCarpet_ProcAMS25} from the classical $N\times N$ grid to the
statistically self-affine setting of the $N\times M$ subdivision of $[0,1]^2$. While some arguments
can be adapted quickly, one interesting question arises naturally. Due to symmetry, the critical
probability of left-right crossing is clearly the same as of top-bottom crossing in the $N\times N$
subdivision. It is not clear at all at first thought (at least not to the authors of the survey)
whether this should remain true for the $N\times M$ subdivision. Denoting
$\vartheta(p)\coloneqq\mathbb{P}_p(F \text{ percolates in top-bottom direction})$, the main result
of~\cite{FalconerFeng_FractalPercBMCarpet_ProcAMS25} is that $\theta(p)>0$ if and only if
$\vartheta(p)>0$, \ie the critical probabilities do coincide. However, the relationship between
$\theta(p)$ and $\vartheta(p)$ is unknown and posed as an open problem, see \cref{ques:falconerfeng}.
\item
  The question of determining $p_c$ is an unresolved difficult open problem, see \cref{ques:crit}.
  Except for special cases, there is no formal proof of the widely believed monotonicity
  property that $p_c(N')\leq p_c(N)$ for $N'\geq N$. On the plane for small values of $N=2,3,4$ the
  current best bounds for $p_c(N)$ are due to Don~\cite{Don_CriticalProbabilityFractalPerc}. A
  fairly simple branching process argument in~\cite{Chayes2Durrett_FractalPercConnectivity_PTRF88}
  shows that $p_c>N^{-1/2}$. A better lower bound for $N \geq 4$ follows
  from~\cite{Chayes2_FractalPercLargeN89} where it was shown that $p_c(N)$ tends to the percolation
  threshold of Bernoulli site percolation on the square lattice as $N\to\infty$ from above. Klatt
  and Winter~\cite{KlattWinter_GeometricFunctionalsFractalperc} investigated whether geometric
  functionals of Mandelbrot percolation could be used to estimate $p_c$.
\end{itemize}
The remaining remarks concern higher dimensional Mandelbrot percolation when $d\geq 3$.
\begin{itemize}
\item
In higher dimensions $\theta(p)$ can be defined the same way by modifying $L=\{0\}\times[0,1]^{d-1}$
and $R=\{1\}\times[0,1]^{d-1}$. The argument of~\cite{Chayes2Durrett_FractalPercConnectivity_PTRF88}
generalises to show that in arbitrary dimensions $p_c(N)\in(0,1)$ for all $N\geq 2$. The large $N$
limit result of~\cite{Chayes2_FractalPercLargeN89} was generalised by Falconer and
Grimmet~\cite{FalconerGrimmet_FractalPerc_92}. Instead of taking just the hypercubic lattice
$\mathbb{Z}^d$, additional edges are added, namely, two vertices of $\mathbb{Z}^d$ are connected if
and only if all their coordinates differ by at most one and are equal in at least one coordinate.
They show that $p_c(N)$ in dimension $d$ tends from above to the critical probability of site
percolation on this $d$-dimensional lattice.
\item
  It is conjectured that $p_d=p_c$ for all $N\geq 2$ in arbitrary dimensions, see \cref{conj:dustconnected}.
  This was shown by Broman and Camia~\cite[Theorem
4.3]{BromanCamia_UniversalConnectivityFractalPercModels} for all $d\geq 3$ and $N$ larger than some
$N_0(d)$.
\item
The discontinuity of $\theta(p)$ at $p_c$ is in contrast to independent percolation on
$\mathbb{Z}^2$ where the phase-transition is known to be continuous. However, this discontinuity is
a more general phenomenon. In higher dimensions Broman and Camia~\cite[Corollary
1.2]{BromanCamia_LargeNCrossingCriticalFractalPerc} proved that $\theta(p)$ is discontinuous at
$p_c(N)$ for $N$ larger than the same $N_0(d)$ as before.
In~\cite{BromanCamia_UniversalConnectivityFractalPercModels} it was also proved that
$\mathbb{P}_{p_d}(F \text{ is not totally disconnected})>0$  for all $d\geq 2$ and $N\geq 2$. 
\item
In dimensions $d\geq 3$ there are alternative options to define percolation. On the plane, if $F$
has a left-right crossing, then $[0,1]^2\setminus F$ does not have a top-bottom crossing. This is no
longer true in higher dimensions. Instead, one can consider $(d-1)$-dimensional ``sheets'' crossing
from $L$ to $R$ which block any top-bottom crossings in $[0,1]^d\setminus F$. Analogously,
$\theta_s(p)$ can be defined to be the probability of the event that sheet percolation occurs and
$p_s=p_s(N)=\inf\{p:\, \theta_s(p)>0\}$ to be the corresponding critical probability.
Orzechowski~\cite{Orzechowski_SheetPercInFractalPerc} proved a limit for $p_s(N)$ as $N\to\infty$
for any dimension $d$. This result implies that for $d\geq 3$ and $N$ large enough, $p_c(N)<p_s(N)$,
\ie there exists an interval of $p$ for which crossings are possible both in $F$ and its complement.
Similar to the discontinuity of $\theta(p)$ at $p_c$, Broman and Camia~\cite[Corollary
1.4]{BromanCamia_LargeNCrossingCriticalFractalPerc} showed that $\theta_s(p)$ is discontinuous at
$p_s$ for $N$ large enough in any dimension $d\geq 3$.
\end{itemize}

We end the section with some analogous results about fat fractal percolation
from~\cite{BromanEtal_FatFractalPerc}. We are in arbitrary dimensions and recall from
example~\cref{ex:fatperc} in~\cref{sec:generalConstr} that $p^{(n)}$ is a non-decreasing sequence
with $\prod_{n=1}^\infty p^{(n)}>0$ and $ p_{\bi}=p^{(n)}$ for all $\bi\in\Sigma_n$. It is immediate
that $\mathcal{L}(F)>0$ with positive probability, where $\mathcal{L}$ denotes the ($d$-dimensional)
Lebesgue measure. In fact more is true.

\begin{theorem}[{\cite[Theorem 1.8 \& 1.9]{BromanEtal_FatFractalPerc}}]\label{thm:FatPerc}
Conditioned on the event that $F\neq\varnothing$,
\begin{equation*}
\text{either }\;\; \mathcal{L}(F^d)=0 \text{ and } \mathcal{L}(F^c)>0 \text{ a.s.} \quad\text{ or
}\quad \mathcal{L}(F^d)>0 \text{ and } \mathcal{L}(F^c)=0 \text{ a.s.}
\end{equation*}
In particular, a sufficient condition for $\mathcal{L}(F^d)=0$ a.s. is if $\prod_{n=1}^\infty (p^{(n)})^{N^n}>0$.

Furthermore, $F$ has an empty interior a.s. if $\prod_{n=1}^\infty (p^{(n)})^{N^{dn}}=0$. Otherwise,
$F$ can be written as the union of finitely many cubes a.s.
\end{theorem}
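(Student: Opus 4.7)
The plan is to handle the three assertions in turn, using the stochastic self-similarity of fat fractal percolation and the fact that $\prod_n p^{(n)}>0$ forces the non-negative martingale $\mathcal{L}(F_n)=\prod_{k\le n}p^{(k)}\cdot N^{-dn}\#Q_n$ to be $L^2$-bounded and to converge almost surely to $\mathcal{L}(F)>0$ on the event of non-extinction. In particular, $\mathcal{L}(F^c)+\mathcal{L}(F^d)>0$ a.s.\ on $\{F\neq\varnothing\}$, so at least one of the two parts carries positive mass.

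For the dichotomy, the key observation is the self-similar decomposition $\mathcal{L}(F^c)=\sum_{\bi\in Q_1}\mathcal{L}(F^c_{\bi})$ (and similarly for $F^d$), valid up to the Lebesgue-null sub-cube boundaries, where $F_{\bi}$ is a scaled independent copy of fat percolation with shifted parameter sequence $(p^{(2)},p^{(3)},\ldots)$. Writing $\phi_k=\mathbb{P}_{(p^{(k)},\ldots)}(\mathcal{L}(F^c)>0)$ and $\psi_k$ for the analogue with $F^d$, conditioning on $Q_1$ (which is $\mathrm{Bin}(N^d,p^{(k)})$-distributed) yields the recursion
\[
  \phi_k \;=\; 1-(1-p^{(k)}\phi_{k+1})^{N^d},
\]
the shift analogue of \cref{eq:10}, and a corresponding one for $\psi_k$. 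A Kolmogorov-type 0-1 argument on the branching tree, together with this recursion, shows that conditionally on $\{F\neq\varnothing\}$ each of the events $\{\mathcal{L}(F^c)>0\}$ and $\{\mathcal{L}(F^d)>0\}$ has probability $0$ or $1$. The main obstacle, and the genuinely delicate part of the whole theorem, is to rule out the case in which both conditional probabilities equal $1$ simultaneously; I would resolve this by a Lebesgue density argument, showing that at a density-$1$ point of $F^c$ the self-similar structure on arbitrarily fine scales forces the connected behaviour to propagate to a full-$\mathcal{L}$-measure subset of $F$, so that $\mathcal{L}(F^d)=0$ necessarily.

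Granted the dichotomy, the sufficient condition $\prod_n (p^{(n)})^{N^n}>0\Rightarrow\mathcal{L}(F^d)=0$ a.s.\ becomes an axis-parallel line test: for $x\in[0,1]^d$ let $L_x$ be the segment through $x$ in direction $e_1$. At level $n$, $L_x$ intersects exactly $N^n$ sub-cubes of the dyadic partition, and by independence across levels $\mathbb{P}(L_x\subset F)=\prod_n(p^{(n)})^{N^n}>0$; whenever $L_x\subset F$ the point $x$ lies in a non-trivial connected subset of $F$, so $x\in F^c$. Fubini then gives
\[
  \mathbb{E}[\mathcal{L}(F^c)] \;\ge\; \int_{[0,1]^d}\mathbb{P}(L_x\subset F)\,dx \;=\; \prod_n(p^{(n)})^{N^n} \;>\; 0,
\]
so $\mathbb{P}(\mathcal{L}(F^c)>0)>0$, and the dichotomy upgrades this to $\mathcal{L}(F^c)>0$ a.s.\ on $\{F\neq\varnothing\}$, hence $\mathcal{L}(F^d)=0$ a.s.

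For the final assertion, set $\rho_k=\prod_{m\ge 1}(p^{(k+m)})^{N^{dm}}$, the conditional probability that a fixed surviving level-$k$ cube is entirely retained in $F$. Using $1-\prod(1-a_m)\le\sum a_m$ one checks that $\rho_k=0$ for every $k$ iff $\prod_n(p^{(n)})^{N^{dn}}=0$; in that case, a countable union bound over all dyadic sub-cubes of all levels gives empty interior a.s. Otherwise, $1-\rho_k \le N^{-dk}\sum_{n>k}N^{dn}(1-p^{(n)})$, so $N^{dk}(1-\rho_k)\to 0$ as the tail of a convergent series and hence $\rho_k^{N^{dk}}\to 1$. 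Letting $A_k$ denote the decreasing event that some cube in $Q_k$ has a dead descendant at a finer level, one has $\mathbb{P}(A_k)\le 1-\rho_k^{N^{dk}}\to 0$, whence $\mathbb{P}(\bigcap_k A_k)=0$; on its complement $F=\pi(Q_k)$ is a finite union of dyadic cubes.
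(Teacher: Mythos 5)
First, note that the survey does not actually prove this theorem; it merely quotes it from Broman, Camia, Joosten and Meester, so there is no internal proof to compare against and your sketch has to be judged on its own. The last two assertions in your write-up hold up: the quantity $\rho_k$, the tail estimate $1-\rho_k\le N^{-dk}\sum_{n>k}N^{dn}(1-p^{(n)})$, the monotonicity of the events $A_k$, and the union bound do give the interior/finite-union dichotomy governed by $\prod_n(p^{(n)})^{N^{dn}}$. Similarly, the line test is correct --- an axis-parallel segment meets exactly $N^n$ level-$n$ cubes, the retention decisions across levels are independent, and Fubini gives $\mathbb{E}[\mathcal{L}(F^c)]\ge\prod_n(p^{(n)})^{N^n}>0$ --- so the sufficient condition for $\mathcal{L}(F^d)=0$ follows \emph{provided} the first dichotomy is available.

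The genuine gap is that first dichotomy, which is the heart of the theorem and on which everything else leans. You flag the obstacle yourself --- excluding the mixed regime where both $\mathcal{L}(F^c)>0$ and $\mathcal{L}(F^d)>0$ --- but your proposed resolution is only a gesture. It is not clear why Lebesgue density of $F^c$ at a typical point should force almost all of $F$ into $F^c$: membership in $F^c$ versus $F^d$ is a \emph{global} connectivity property of $F$, not a local density property, and density one of $F^c$ at $x$ says nothing about whether points near $x$ are actually joined to $x$ inside $F$ (dust points can be arbitrarily close to a fat connected component). Likewise, the asserted 0--1 law for $\{\mathcal{L}(F^c)>0\}$ and $\{\mathcal{L}(F^d)>0\}$ conditioned on survival is nontrivial in the time-inhomogeneous setting: the recursion $\phi_k=1-(1-p^{(k)}\phi_{k+1})^{N^d}$ is level-dependent, so the usual ``unique fixed point of the offspring generating function'' argument from homogeneous branching does not apply directly, and these are not tail events, so Kolmogorov's 0--1 law is not available off the shelf. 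Both steps would need to be made rigorous or replaced; the actual argument in Broman et al.\ is considerably more involved than your sketch suggests.
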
 
The authors of~\cite{BromanEtal_FatFractalPerc} point out that they do not know whether
$\mathcal{L}(F^d)>0$ and $\mathcal{L}(F^c)=0$ is possible at all and pose it as an open
problem, see \cref{ques:fatperc}. 

\clearpage
\section{Dimension theory of the limit set}\label{sec:DimLimitSet}
\subsection{Dimension theory}
Of particular interest in fractal geometry are the notions of dimensions, which quantify specific
scaling behaviour of sets and measures. 
The most well-known are the Hausdorff and box-counting dimensions, which are obtained by considering
optimal coverings with homogeneously sized covers (box-counting) or arbitrary countable covers
(Hausdorff). Briefly, they can be defined by
\[
  \dimh X = \inf\bigg\{\alpha > 0 : \text{$\forall\epsilon>0$ $\exists$ cover $U_i$ of $X$ with
  }\sum_{i\in\bbN} \diam(U_i)^\alpha < \epsilon\bigg\}
\]
and
\[
  \dimb X = \inf\left\{ \alpha>0 : \exists C>0 \forall r>0 \text{ such that }N_r(X) \leq
  C\left(\frac{1}{r}\right)^\alpha\right\},
\]
where $N_r(X)$ is the minimal cardinality of sets of diameter at most $r$ needed to cover $X$.
Note that technically we just defined the \emph{upper} box-counting dimension. It can also be
expressed as the upper limit $\limsup_{r\to 0} \log(N_r(X))/\log(1/r)$ and the \emph{lower}
box dimension can be defined analogously using the lower limit $\liminf$.
The usefulness of the Hausdorff dimension arises from the fact that it is associated with a measure,
the Hausdorff measure $\cH^s$, defined as
\[
  \cH^s(X) = \sup_{\delta>0}\inf\left\{\sum_{i\in\bbN} g(\diam(U_i)) : \text{$U_i$ is a cover of $X$ with
  }\diam(U_i)<\delta\right\},
\]
where $g(r) = r^s$.
In many cases the measure is positive and finite and if not, the gauge function $g$ can often be
chosen such that the measure becomes positive and finite.

A localised version of the upper box counting dimension also exists and is known as the Assouad
dimension. There is a wealth of literature on the subject and we refer the reader to
\cite{FraserBook,MackayTysonBook,RobinsonBook} for more
background and applications. 
Its importance comes from its link to embedding theory and the Assouad embedding theorem which
provides useful bounds on embeddability of metric spaces with finite Assouad dimension into
Euclidean space with minimal distortion.
The Assouad dimension is defined by
\[
  \dima X =\inf\left\{ \alpha>0 : \exists C>0 \;\forall 0<r<R\;\forall x\in X \text{ we have }N_r(B(x,R)\cap X) \leq
  C\left(\frac{R}{r}\right)^\alpha\right\} 
\]
and is linked to the metric property of \emph{doubling}: A metric space is doubling\footnote{A
  metric space is defined to be doubling if any ball $B(x,2r)$ can be covered by a uniformly bounded
number of balls $B(y_i,r)$.} if and only if it has finite Assouad dimension.  Over the last decade,
the study of \emph{interpolating dimensions} has gained a lot of traction, with the aim of providing
spectra that interpolate between various notions of dimension. Here we briefly define two important
such notions; the (generalised) Assouad spectrum and the intermediate dimension. We
refer the reader to \cite{FraserSurvey21} for more details. 
The basic idea is to interpolate between dimensions by restricting scale differences. The (upper) Assouad
spectrum is given by 
\[
  \dimaspec{\phi} X =\inf\left\{ \alpha>0 : \exists C>0 \;\forall 0<r<R^{1+\phi(R)}\;\forall x\in X
    \text{ we have }N_r(B(x,R)\cap X) \leq
  C\left(\frac{R}{r}\right)^\alpha\right\} 
\]
where the dimension function\footnote{A dimension function $\phi(R):(0,1)\to(0,\infty)$ is any
  function such that $\phi(R)\log(1/R)$ increases to infinity as
$R\to0$ and $\phi(R)$ decreases as $R\to0$.} $\phi(R)$ provides the separation between scales.
For the simple family of constant functions $\phi_\theta(R) = {1/\theta}-1$ with $\theta\in(0,1)$,
the resulting dimension is known
as the $\theta$-spectrum. The reader is referred to \cite{BanajiRutarTroscheit24} and references
therein for the most recent state-of-the-art for the generalised spectrum.  The Assouad spectra
interpolate between the upper box-counting and (quasi) Assouad dimensions and have been useful in
studying, amongst other, the spiral winding problem \cite{Fraser21_Spiral}, to give estimates on
quasi-conformal distortion \cite{ChrontsiosTyson23_distortion}, as well as allowing embedding
theorems using weaker assumptions than the Assouad embedding theorem
\cite{ChrontsiosTroscheit24_weak}.

\smallskip
A separate interpolation is that between the Hausdorff and the (upper) box-counting dimension, a
quantity known as the intermediate dimension.
It is defined similarly to the Hausdorff measure, but with the limitation that the scale difference
between covering sets cannot be too extreme. To see this, first note that the upper box-dimension is
defined through covers of equal sizes. The sum in the Hausdorff measure then becomes 
\[
  \sum_i \diam(U_i)^s = N_r(X)\cdot (2r^s).
\]
To define the upper $\Phi$-intermediate dimension~\cite{Banaji_GeneralisedIntermedDim} the maximal
and minimal covers are allowed to differ in size
$\min\diam(U_i) \geq \Phi(\max\diam(U_i))$, where $\Phi$ is a decreasing function. Then 
\begin{align*}
  \overline{\dim}^{\Phi} X = \inf\Big\{ \alpha>0 :
    \forall\eps>0,\,
    &\exists\delta_0\in(0,1],\,\forall\delta\in(0,\delta_0),\exists\text{cover
    }\{U_1,U_2,\dots\} \text{ of }X\\
    &\text{ such that } \Phi(\delta)\leq\diam(U_i)\leq\delta\text{ and }\sum_i
  \diam(U_i)^\alpha \leq \eps\Big\},
\end{align*}
where the special case $\Phi(\delta)=\delta^{1/\theta}$ for all $\delta\in[0,1]$ is the upper
$\theta$-intermediate dimension $\overline{\dim}_{\theta}X$ introduced
in~\cite{FalconerFraserKempton_IntermedDim}. The lower $\Phi$-intermediate dimension
$\underline{\dim}^{\Phi}$ is defined analogously by considering just a sequence
$\delta_{\varepsilon}\to 0$. If the two coincide, then the $\Phi$-intermediate dimension
$\dimPhi{\Phi}$ exists.  The study of the $\theta$-intermediate dimension, including its attainable
values \cite{BanajiRutar22_Attainable}, and their links to quasi-conformal distortion
\cite{FraserTysonSobolev} revealed its usefulness in metric geometry. For many sets its value has
been computed over the range $\theta\in[0,1]$ and highlights interesting geometric properties, see
for example~\cite{BanajiKoloss_IntermedDimBMCarpet,
BurrellFalconerFraser_EllipticalSpiral,FalconerFraserKempton_IntermedDim}.

\subsection{Mandelbrot percolation}
The Hausdorff dimension of Mandelbrot percolation was first computed by Hawkes
\cite{Hawkes_DimHFractalPerc81} and Kahane--Peyri\'ere \cite{KahanePeyriere_DimHFractalPerc76} to be
\[
  \dimh F = \frac{\log p N^d}{\log N} \qquad\text{almost surely, conditioned on }F\neq\varnothing,
\]
using martingale methods.
While not explicitly computed there, the corresponding result for the box-counting dimension, which
coincides with the Hausdorff dimension almost surely, can be derived from their covering arguments. 
In fact, given that the counting of covering sets is comparable to counting covers in $F_k$,
martingale convergence gives us the stronger result that the covering number $N_r$ is comparable to
$r^{-s}$, where $s = \dimb F$.

The Assouad dimension of fractal percolation, however, is more extremal. There are several ways of
seeing this, but all rely on the observation that the relationship between $r<R$ is arbitrary.
Letting $R\ll1$, there are many independent parts of the set in which the set can be ``thicker''
than average. The event of having all subcubes survive for $n$ generations is superexponentially
small in $n$, but still positive. Hence, a simple Borel-Canelli argument shows that there are
infinitely many local parts in Mandelbrot percolation where all subcubes are kept for any $n$, with
at least one point being kept in that subcube. This is enough to construct a sequence of blow-ups
that converge to the full unit cube and the Assouad dimension is the same as the ambient space
dimension, almost surely conditioned on non-extinction, that is,
\[
  \dima F = d \qquad\text{a.s.~ conditioned on non-extinction}.
\]
Using basic properties of the Assouad dimension this can be extended to
all orthogonal projections into $\bbR^k$ simultaneously, which are maximal. Almost surely,
conditioned on non-extinction,
\[
  \dima \Pi(F) = k, \quad\text{for all orthogonal projections }\Pi:\bbR^d\to\bbR^k\text{
  simultaneously.}
\]
This was first observed in Fraser--Miao--Troscheit
\cite{FraserMiaoTroscheit_DimARndFractals_ETDS18}, but the full Assouad dimension of Mandelbrot
percolation also follows from the earlier work of Berlinkov and J\"arvenp\"a\"a
\cite{BerlinkovJarvenpaa_FractalPercPorosity} using porosity arguments.
An interesting consequence of the full Assouad dimension result follows by observing that the
Assouad dimension is a bi-Lipschitz invariant. For arbitrary $d\in\bbN$ we may let $p$ be
arbitrarily close to $N^{-d}$ and thus the set has arbitrarily small Hausdorff and box-dimension
a.s.~conditioned on non-extinction. However, it can not be embedded into $\bbR^k$ for any $k<d$ with
bi-Lipschitz mappings as the Assouad dimension is full.

Other, more fine-grained questions have been answered. For instance, the Hausdorff and packing
dimensions come with natural measures: the Hausdorff and packing measure. Unlike the box-dimension
result, where the number of covers was comparable to $r^{-s}$, for arbitrary covers (and packings)
the measure at the right exponent is $0$ and $\infty$, respectively.
Modifying the measure slightly by allowing gauge functions, the correct gauge function can be
determined. This was done in full generality by Watanabe in \cite{Watanabe04_PackingMeasure} and
\cite{Watanabe07_ExactHausdorffMeasure}, where the gauge function $g(r) =
r^s \log|\log r|$ yields positive and finite Hausdorff measure. For the packing measure this is more
intricate and there is no gauge function for the percolation model. It is necessary that the number
of zero or one descendant does not occur almost surely.
See also Berlinkov and Mauldin \cite{BerlinkovMauldin02_PackingMeasure} for earlier work.

The observation that the Hausdorff and box-counting dimensions coincide, regardless of potential
overlaps has been long known for deterministic self-similar sets (and is false for self-affine), see
\cite{Falconer89_quasi}. In the random setting, these notions still tend to coincide, which was made formal in 
\cite{LiuWu02_dimensionalResult}, which was later extended to more subtle interactions involving
so-called graph-directed sets, that study subshifts of finite type, rather than the full coding
space \cite{Troscheit_DimHBRNDGraphDirectedSelfSim}. See also 
\cite{Olsen_Book, RoyUrbanski11_RandomGraphDirected} for related works on random graph-directed
sets.

Having established that the Hausdorff and box-counting dimensions coincide, this renders the
intermediate dimension moot: it will also just coincide. However, given the gap between box-counting
and Assouad dimension, we may investigate its spectrum. The spectrum turns out to be constant
\cite{Troscheit_QuasiADimStochSelfSim} and we need to use the generalised spectrum to determine the
point of transition. This was first attempted in \cite{Troscheit_AssouadSpectrumGWTree} and
completed in \cite{BanajiRutarTroscheit24}, where the exact transition function is determined and is
of the form
\(
  c\cdot{\log\log (1/R)}/{\log(1/ R)}.
\)
For this special dimension window we write $$\psi(R) = \log\log(1/R) / \log(1/R).$$

We summarise the dimension theory of Mandelbrot percolation in the following theorem.
\begin{theorem}\label{thm:DimMandelbrotPerc}
  Let $F$ be Mandelbrot percolation in $\mathbb{R}^d$ with parameters $N$ and $p>N^{-d}$.
  Let $\Pi:\mathbb{R}^d\to\mathbb{R}^k$ be an orthogonal projection for $1\leq k \leq d$. Then the
  following are simultaneously true for almost all realisations, conditioned on non-extinction:
  $$\dimh F = \dimb F = \frac{\log pN^d}{\log N}
  \quad\text{and}\quad
  \dima \Pi (F) = k,
  $$
  and in particular $\dima F = d$. Further,  
  \[
    \dimaspec{\phi} F = \alpha\frac{\log(1/p)}{d\log^2 N} + \frac{\log pN^d}{\log N}
  \]
  for all dimension functions $\phi$ such that $\lim_{R\to 0} \psi(R)/\phi(R) = \alpha$, where $\alpha\in[0,\log N^d]$.
\end{theorem}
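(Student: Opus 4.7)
The plan is to address the four claims in ascending order of difficulty: the Hausdorff/box-dimension equality, then the Assouad dimension of orthogonal projections, and finally the Assouad spectrum, which is the main obstacle.

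For the Hausdorff and box dimensions I would exploit the embedded Galton--Watson structure: $\#Q_k$ is supercritical with offspring distribution $\operatorname{Bin}(N^d,p)$ and mean $\mu\coloneqq pN^d>1$, so $W_k=\mu^{-k}\#Q_k$ is a non-negative $L^2$-bounded martingale converging to a limit $W$ which is strictly positive on the non-extinction event. This immediately gives $N_{N^{-k}}(F)\asymp W\cdot\mu^k$, whence $\dimb F=\log\mu/\log N$. The matching Hausdorff lower bound follows from the mass distribution principle applied to the natural branching measure on $F$ obtained by distributing the martingale masses $W$ along cylinders of $\Sigma$.

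For the projection Assouad dimension, the engine is a Borel--Cantelli ``fullness'' argument. For each $n\in\bbN$ and each level-$k$ cube $\bi$, the event that every descendant of $\bi$ up to generation $k+n$ is retained has positive probability $p^{N^{dn}}$; these events are independent across disjoint subtrees, so along a sparsely chosen sequence of generations they occur infinitely often almost surely, and the relevant seeds are non-empty by another Borel--Cantelli step since $p>N^{-d}$. A diagonal extraction produces affine blow-ups of $F$ that are $\eps_n$-dense in $[0,1]^d$ with $\eps_n\to 0$, yielding a weak tangent equal to $[0,1]^d$ and hence $\dima F\geq d$. Post-composing with any orthogonal projection $\Pi\colon\bbR^d\to\bbR^k$ gives a weak tangent of $\Pi(F)$ containing $\Pi([0,1]^d)$, a full $k$-cube; as the single fullness null-set does not depend on $\Pi$, one obtains $\dima\Pi(F)=k$ for all $\Pi$ simultaneously.

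The Assouad spectrum formula is the main obstacle, because one must calibrate the fullness probability against the scale separation imposed by $\phi$. Parameterise $R=N^{-m}$ and $r=N^{-n}$ with $n\geq(1+\phi(R))m$. The supremum of $N_r(B(x,R)\cap F)$ over $x\in F$ is controlled by the shallowest level $\ell\in[m,n]$ at which some descendant cube of side $N^{-\ell}$ retains all of its $N^{d(n-\ell)}$ descendants down to level $n$. Per candidate this event has probability $p^{N^{d(n-\ell)}}$; there are $N^{d(\ell-m)}$ candidates inside any $R$-ball and $N^{dm}$ possible $R$-balls, so a first/second-moment analysis pins the critical depth $\ell^\ast$ by the balance $N^{d\ell^\ast}p^{N^{d(n-\ell^\ast)}}\asymp 1$, giving $N^{d(n-\ell^\ast)}\log(1/p)\asymp d\ell^\ast\log N\asymp\log\log(1/R)$. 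The thickest block therefore contributes a multiplicative excess of order $N^{d(n-\ell^\ast)}=\exp(\Theta(\log\log(1/R)))$ on top of the typical $(R/r)^s$ covering count; after dividing by $\log(R/r)=\phi(R)\log(1/R)\sim\log\log(1/R)/\alpha$, this excess translates precisely into the additional exponent $\alpha\log(1/p)/(d\log^2 N)$ above $s=\log(pN^d)/\log N$. The genuine difficulty is promoting this heuristic into a two-sided bound that is uniform in $x\in F$ and in all admissible pairs $(r,R)$ for \emph{every} dimension function $\phi$ with the prescribed asymptotic---this is precisely what the generalised-spectrum machinery of~\cite{BanajiRutarTroscheit24} is designed to deliver, and I would invoke their framework to close the argument.
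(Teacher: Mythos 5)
Your treatment of the Hausdorff/box dimension and of the Assouad dimension of projections is correct and matches the literature the survey cites: the martingale $W_k=\mu^{-k}\#Q_k$ plus a mass distribution principle is exactly the Hawkes/Kahane--Peyri\`ere argument, and the Borel--Cantelli ``fullness'' argument producing $[0,1]^d$ as a weak tangent (uniformly over all orthogonal projections, since the full null-set does not depend on $\Pi$) is exactly the Fraser--Miao--Troscheit/Berlinkov--J\"arvenp\"a\"a approach.

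The spectrum heuristic, however, is built on the wrong extremal event, and this is a genuine gap rather than a presentational slip. You bound $N_r(B(x,R)\cap F)$ by locating the shallowest level $\ell$ at which a descendant cube retains \emph{all} $N^{d(n-\ell)}$ of its level-$n$ offspring, and you balance $N^{d\ell}p^{N^{d(n-\ell)}}\asymp 1$. (As a side remark, the chain $N^{d(n-\ell^{\ast})}\log(1/p)\asymp d\ell^{\ast}\log N\asymp\log\log(1/R)$ is misstated: since $\ell^{\ast}\geq m=\log(1/R)/\log N$, the middle quantity is $\asymp\log(1/R)$; the quantity that is $\asymp\log\log(1/R)$ is $d(n-\ell^{\ast})\log N$.) The deeper problem is that the full-subtree event is not the dominant contribution at the scale separation imposed by $\phi$. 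If one carries out the full-subtree calibration honestly, the excess exponent it produces is $\min(\alpha,\cdot)$-type, giving a \emph{piecewise-linear} spectrum of the form $\max\bigl(\dimb F,\min(\alpha,\,\cdot)\bigr)$, not the affine formula $\dimb F + \alpha\log(1/p)/(d\log^2 N)$. The extremal event actually responsible for the spectrum is a \emph{moderate} deviation of the embedded Galton--Watson process: the event that a subtree of depth $n(k)$ rooted at level $k$ contains at least $(pN^d)^{t\,n(k)}$ retained cubes for some $t$ just above $1$ (rather than the full $N^{d\,n(k)} = (pN^d)^{\gamma\,n(k)}$ cubes, which corresponds to the maximal exponent $t=\gamma$). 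The probability of this moderate deviation decays like $\exp\bigl(-(pN^d)^{(t-1)\frac{\gamma}{\gamma-1}n(k)}\bigr)$ (a branching-process large-deviation estimate, \cite[Theorem F]{BanajiRutarTroscheit24}), and balancing this against the $\asymp (pN^d)^{k}$ available independent roots forces $t-1 \asymp \alpha\frac{\gamma-1}{\gamma\log(pN^d)}$, yielding exactly the exponent $\frac{t\log(pN^d)}{\log N}= \dimb F + \alpha\frac{\log(1/p)}{d\log^2 N}$. This is the mechanism used in \cite{BanajiRutarTroscheit24} and reproduced in the survey's own proof of the analogous statement for $F^c$ (Theorem~\ref{thm:DimAssouadSpectrumConnComp}); invoking that machinery is the right move, but the heuristic you offer to motivate it does not lead to the claimed formula and would give the wrong answer if promoted to a proof.
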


We end our section by mentioning a recent result by Rossi and
Suomala~\cite{RossiSuomala_FractalPercQuasisymmMap_21IMRN}, who show that the dimension
of Mandelbrot percolation can be lowered by quasisymmetric mappings, establishing that the
quasi-conformal dimension is strictly smaller than the Hausdorff dimension.

\subsection{Stochastically self-affine sets}\label{sec:DimStochSelfAffine}

In this section we review some results on the dimension theory of fractal percolation coming from
diagonal self-affine IFSs. These planar carpet and higher
dimensional sponge models already have an extensive literature in the deterministic setting. For the
initial example, take the IFS consisting of those orientation preserving maps which take $[0,1]^2$
into the $M\cdot N$ many axis-parallel congruent rectangles of side-lengths $1/M$ and $1/N$ for some integers
$2\leq M<N$. We still assume that $p_{\bi}\equiv p$ for all $\bi\in\Sigma_*$ and call it fractal
percolation on the $M\times N$ grid. Deterministic IFSs on the $M\times N$ grid are usually referred
to as general Sierpi\'nski or Bedford--McMullen carpets. 

\begin{theorem}\label{thm:MNGridDimTheory}
Let $F$ be fractal percolation on the $M\times N$ grid with retention probability $p$. If $p\leq
1/(MN)$ then $F=\varnothing$ almost surely, otherwise, there is a positive probability that $F\neq
\varnothing$. Assuming that $1/(MN)<p\leq 1$, for almost all realisations, conditioned on non-extinction,
\begin{equation*}
\dim_{\mathrm{H}}F=\dim_{\mathrm{B}}F=
\begin{cases}
\log(pMN)/\log M, &\text{if } 1/(MN)<p\leq 1/N; \\
1+\log(pN)/\log N,  &\text{if } 1/N<p\leq 1,
\end{cases}
\end{equation*}
moreover, $\dim_{\mathrm{A}} F=2$.
\end{theorem}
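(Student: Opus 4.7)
The extinction statement follows from the standard Galton--Watson analysis: the population $Z_k$ of surviving level-$k$ cylinders is a branching process with $\Bin(MN,p)$ offspring, subcritical precisely when $pMN\le 1$. For the upper bound on $\dimb F$, each level-$k$ surviving cylinder is a rectangle of size $M^{-k}\times N^{-k}$, which yields two natural covers: using boxes of the larger side-length $M^{-k}$ needs one box per cylinder, while using boxes of the smaller side-length $N^{-k}$ needs $(N/M)^k$ boxes per cylinder. Combining $\bbE[Z_k]=(pMN)^k$ with Markov's inequality and Borel--Cantelli along geometric sequences yields the almost-sure upper bounds $\log(pMN)/\log M$ and $1+\log(pN)/\log N$; an elementary comparison shows these cross exactly at $p=1/N$, with the minimum matching the piecewise formula in the theorem.

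For the matching lower bound on $\dimh F$, I would construct a random martingale measure $\mu$ on $F$ and control its lower local dimension, then apply the mass distribution principle. The natural candidate distributes total mass uniformly among surviving level-$k$ cylinders and takes a weak limit. For $x\in F$ and a ball $B(x,r)$, one counts the surviving cylinders that $B(x,r)$ intersects; the column structure induced by the stronger contraction in the $M$-direction is essential. In the thin regime $p\le 1/N$, most columns contribute few cylinders on average, so the local mass decays like $r^{\log_M(pMN)}$. In the thick regime $p>1/N$, columns intersecting $B(x,r)$ typically contain many cylinders and one must interpolate between scales $M^{-k}$ and $N^{-k}$, producing the enhanced exponent $1+\log_N(pN)$, in analogy with the deterministic Bedford--McMullen analysis.

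For $\dima F=2$, I would adapt the approach used for Mandelbrot percolation in \cref{thm:DimMandelbrotPerc}. A Borel--Cantelli argument, based on the positive (though tiny) probability that a cell $\pi(\bi)$ and all its descendants to depth $\ell$ are retained, produces, for every $\ell\in\bbN$, arbitrarily deep such cells almost surely. The anisotropy requires extra care: the retained cells have aspect ratio $(N/M)^k$, so a naive weak-tangent argument through a single scale pair gives only $\dima F\ge 1+\log_N M$. Promoting the bound to the full ambient dimension needs one to exploit the independent random continuation of the percolation inside each depth-$\ell$ sub-cell, combined with a multi-scale construction in the spirit of the porosity arguments of \cite{BerlinkovJarvenpaa_FractalPercPorosity} that force arbitrarily dense configurations at suitably matched scales $r<R$.

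The main obstacle is the Hausdorff lower bound in the thick regime: in the deterministic Bedford--McMullen setting one has the strict inequality $\dimh<\dimb$, so the random averaging across columns must be precisely what closes this gap. This requires uniform concentration estimates for the $\Bin$-distributed column populations throughout the tree together with delicate control of correlations between neighbouring columns inside a single ball. Obtaining the lower bound on the local dimension of $\mu$ almost surely, rather than just on average, is the technically most involved piece of the argument.
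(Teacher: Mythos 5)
The paper does not prove \cref{thm:MNGridDimTheory}; it assembles the statement from \cite{GatzourasLalley_DimHBStatSelfAffineBMCarpet, Troscheit_DimBRndBoxLikeSelfAffine, BarralFeng_DimRndSelfAffineBMSponges, FraserMiaoTroscheit_DimARndFractals_ETDS18} and only offers a heuristic (the projection to the $x$-axis has dimension $\min\{1,\log(pMN)/\log M\}$, and when $p>1/N$ the fibres carry an extra $\log(pN)/\log N$), so your outline is a genuine attempt at a proof rather than a reproduction. Your upper bound on $\dimb F$ via the two natural covers is essentially complete: one verifies directly that $\min\{\log(pMN)/\log M,\;2+\log p/\log N\}$ reproduces the piecewise expression, and Markov plus Borel--Cantelli along $\delta\in\{M^{-k}\}$ (respectively $\{N^{-k}\}$) upgrades the expectation bound to an almost-sure one. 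One slip: you write ``stronger contraction in the $M$-direction,'' but since $M<N$ that direction has the \emph{weaker} contraction (wider cells); the role you describe for it is right, the label is inverted.

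The gaps are in the two lower bounds, and they share a common omission: the natural scale objects here are \emph{approximate squares}, not cylinders. For $\dimh F$, distributing mass across surviving level-$k$ cylinders is the right starting point (it is essentially the Gatzouras--Lalley branching-random-walk measure), but the mass distribution principle must be applied to balls $B(x,r)$, and at scale $r\approx M^{-k}$ such a ball intersects the approximate square determined by $k$ digits in the $x$-coding and only about $k\log M/\log N$ digits in the $y$-coding, i.e.\ a stack of roughly $N^{k(1-\log M/\log N)}$ cylinders inside a single column. Estimating $\mu(B(x,r))$ therefore requires a large-deviation bound on the number of surviving cylinders in that column stack, uniformly along the tree; this is exactly the concentration step you flag as ``technically most involved,'' but your sketch does not indicate how to obtain it, and without it the enhanced exponent $1+\log(pN)/\log N$ does not follow. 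For $\dima F=2$, the anisotropy concern is legitimate, but the quoted fallback value $1+\log_N M$ appears unmotivated: the single-cell ``all descendants to depth $\ell$ survive'' event with $\ell$ fixed and the ancestor level $k\to\infty$ degenerates (a ball of either natural radius sees only a vanishing sliver of one column in the $x$-direction), and this does not obviously produce the exponent you state. The resolution in \cite{FraserMiaoTroscheit_DimARndFractals_ETDS18} is again to force a dense configuration inside an \emph{approximate square} rather than inside a cylinder; this restores isotropy in the scale pair $(R,r)$ and yields the full ambient dimension. So your ``multi-scale construction'' is the right instinct, but the specific mechanism --- approximate-square normalisation --- is missing from the sketch at both of the places where it is needed.
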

These formulae follow from a collection of more general results
in~\cite{GatzourasLalley_DimHBStatSelfAffineBMCarpet, Troscheit_DimBRndBoxLikeSelfAffine,
BarralFeng_DimRndSelfAffineBMSponges, FraserMiaoTroscheit_DimARndFractals_ETDS18}. Some intuition
explaining these values. One can show that
$\dim_{\mathrm{H}}\proj_xF=\dim_{\mathrm{B}}\proj_xF=\min\{1,\log(pMN)/\log M\}$ almost surely,
conditioned on non-extinction, where $\proj_x$ denotes orthogonal projection to the $x$-axis. When
$p<1/N$, then on average less than 1 rectangle is chosen in each column, hence, it is reasonable to
expect that fibres above the $x$-axis do not ``carry'' any excess dimension. However, if $p>1/N$,
then a typical fibre ``behaves like'' a stochastically self-similar set with dimension
$\log(pN)/\log N$. As with many other random constructions, the Assouad dimension is full here as
well, see~\cite[Theorem 3.2]{FraserMiaoTroscheit_DimARndFractals_ETDS18}.

Gatzouras and Lalley considered a generalisation of fractal percolation
in~\cite{GatzourasLalley_DimHBStatSelfAffineBMCarpet}, commonly referred to as random substitutions,
see~\cref{sec:RndSubstitutions}, on the $M\times N$ grid. They determine the almost sure Hausdorff
and box dimension of these systems~\cite[Eq. (2.1) \&
(2.2)]{GatzourasLalley_DimHBStatSelfAffineBMCarpet} and provide an equivalent characterisation of
when the two dimensions
coincide~\cite[Proposition~6.1]{GatzourasLalley_DimHBStatSelfAffineBMCarpet}. Similarly to the
deterministic case, this can happen if the expected number of rectangles chosen in each column is
the same, resulting in uniform fibres ``on average''. In particular, fractal percolation fits this
description since all rectangles are retained with the same probability. Still on the plane,
Troscheit~\cite[Theorem 4.5]{Troscheit_DimBRndBoxLikeSelfAffine} computed the almost sure value of
the box dimension of fractal percolation on general box-like sets with rotations.

In higher dimensions, Barral and Feng~\cite{BarralFeng_DimRndSelfAffineBMSponges} consider fractal
percolation with equal retention probability on $N_1\times \ldots \times N_d$ grids with $2\leq
N_1<\ldots<N_d$. They prove that Mandelbrot measures, see \cref{sec:cascade}, are exact dimensional
and the dimension satisfies a Ledrappier--Young-type
formula~\cite[Theorem~2.2]{BarralFeng_DimRndSelfAffineBMSponges} which they then use to prove a
variational principle for the Hausdorff dimension of the limit set
$F$~\cite[Theorem~2.3]{BarralFeng_DimRndSelfAffineBMSponges}. They also give a formula for
$\dim_{\mathrm{B}} F$~\cite[Theorem~2.4]{BarralFeng_DimRndSelfAffineBMSponges} and give a
characterisation for when $\dim_{\mathrm{H}} F=\dim_{\mathrm{B}}
F$~\cite[Corollary~2.6]{BarralFeng_DimRndSelfAffineBMSponges}. In a very recent preprint, Barral and
Brunet~\cite{BarralBrunet2025} extend the variational principle for Hausdorff dimension using
inhomogeneous Mandelbrot measures to a significantly larger class of self-affine sponges. They also
show an analogous variational principle for the packing dimension.

\section{Geometry of the non-trivial connected components}\label{sec:ConnectedComp}

In this section we consider Mandelbrot percolation introduced in~\cref{sec:MandelbrotPerc} with
$p\geq p_d$. In this regime it follows from~\cref{thm:basicConn} that the union of the connected
components larger than a single point $F^c$ is non-empty almost surely conditioned on
non-extinction. Here we discuss results regarding the dimension theory of $F^c$ and the regularity
of the percolating paths in $F^c$. 

\subsection{Dimension theory}\label{subsec:DimConnectedComp}

Broman \textit{et al.}~\cite{BromanCamiaJoostenMeester_DimConnectedComponent} obtained the following
dimension results about $F^c$.

\begin{theorem}[{\cite[Theorem 1.1 $\&$ 1.2]{BromanCamiaJoostenMeester_DimConnectedComponent} }]\label{thm:DimConnComp}
For any $d\geq 2$, $N\geq 2$ and $p\geq p_d(N)$, almost surely
\begin{equation*}
	\dim_{\mathrm{B}}  F^c = \dim_{\mathrm{B}}  F =\dim_{\mathrm{H}}  F.
\end{equation*}
If $ F \neq \varnothing$ then almost surely
\begin{equation*}
	\dim_{\mathrm{H}}  F^c<\dim_{\mathrm{H}} F = \dim_{\mathrm{H}}  F^d.
\end{equation*}
Moreover, for every $p$ there exists $1 \leq \beta=\beta(p) \leq d$ such that
\begin{equation*}
	\mathbb{P}_p\left( F^c=\varnothing \text { or } \dim_{\mathrm{H}}  F^c=\beta\right)=1.
\end{equation*}
\end{theorem}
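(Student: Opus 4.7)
The statement breaks into three parts: the box-dimension equality, the strict Hausdorff inequality with $F^d$ carrying the full dimension, and the almost sure constancy of $\dim_H F^c$. I would attack them in this order.

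For the box-dimension equalities, $\dim_B F^c\leq\dim_B F$ is immediate from $F^c\subseteq F$, and $\dim_B F=\dim_H F$ is already known. For the lower bound, the hypothesis $p\geq p_d(N)$ provides some $\rho>0$ such that Mandelbrot percolation in the unit cube produces a non-trivial connected component with probability at least $\rho$. For each $\bi\in Q_n$, the sub-percolation inside $\pi(\bi)$ is an independent rescaled copy, so it contributes a non-trivial component to $F^c$ with probability $\geq\rho$. A standard second-moment argument on the martingale $Z_n(pN^d)^{-n}$ shows that the number of such $\bi$ is $\gtrsim(pN^d)^n$ with positive probability; since each such cube contributes to $N_{N^{-n}}(F^c)$, this yields $\dim_B F^c\geq\log(pN^d)/\log N=\dim_H F$.

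For the strict Hausdorff gap, \cref{thm:basicConn} decomposes $F^c$ into countably many connected components $\{C_i\}$. If I can show $\dim_H C_i\leq\mu$ uniformly for some $\mu<\dim_H F$, then $\dim_H F^c=\sup_i\dim_H C_i\leq\mu<\dim_H F$, and the disjoint decomposition $F=F^c\sqcup F^d$ forces $\dim_H F^d=\dim_H F$. To bound the Hausdorff dimension of a single component, I would track, generation by generation, the level-$n$ cubes meeting a fixed component. This forms a branching process whose offspring law is geometrically constrained: at every scale, a cube contributing to the component must be adjacent in the grid sense to another such cube, and the expected number of compatible retained neighbour configurations is strictly below $pN^d$. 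This yields $\dim_H C_i\leq\log\mu/\log N$ for a suitable $\mu<pN^d$ uniformly in $i$. This step is the main obstacle; carrying it out rigorously requires a careful comparison to an auxiliary subcritical branching process and constitutes the heart of the proof.

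For the a.s.\ constancy, I exploit statistical self-similarity. Define $D=\dim_H F^c$ on $\{F^c\neq\varnothing\}$ and $D=0$ otherwise, and for $\bi\in Q_1$ let $D^{(\bi)}$ denote the analogous quantity for the sub-percolation inside $\pi(\bi)$; the $D^{(\bi)}$ are i.i.d.\ copies of $D$. Any connected component $C\subseteq F$ of diameter $>0$ must contain a non-trivial connected subset inside some $\pi(\bi)$, otherwise $C$ would be a finite union of zero-dimensional sets (contradicting $\dim_H C\geq 1$). Combined with countable stability of $\dim_H$, this gives the recursion $D=\max_{\bi\in Q_1}D^{(\bi)}$. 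Let $\beta=\operatorname{ess\,sup}D$ and $q_\varepsilon=\mathbb{P}(D\leq\beta-\varepsilon)$ for small $\varepsilon>0$. The i.i.d.\ structure and the binomial law of $\#Q_1$ give
\[
q_\varepsilon=\mathbb{E}\bigl[q_\varepsilon^{\#Q_1}\bigr]=(1-p+pq_\varepsilon)^{N^d},
\]
which is exactly the extinction equation \eqref{eq:10}. Its two solutions are $q_\varepsilon=1$ and $q_\varepsilon=q$, the extinction probability. The first contradicts the definition of $\beta$, so $q_\varepsilon=q$ for every $\varepsilon>0$, i.e.\ $D=\beta$ a.s.\ conditioned on non-extinction. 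The bound $\beta\geq 1$ comes from the fact that any connected subset of $\mathbb{R}^d$ with more than one point has Hausdorff dimension at least one, and $\beta\leq d$ is automatic.
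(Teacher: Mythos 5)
The proposal correctly divides the statement into its three parts, but each part has issues, and the central one (the strict Hausdorff gap) is a genuine missing idea rather than a different route.

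On the box-dimension equality, you take a different route from the paper: a second-moment argument on the Galton--Watson martingale showing many level-$n$ cubes contain a non-trivial component. But as written this only gives $\dim_{\mathrm{B}}F^c\geq\dim_{\mathrm{H}}F$ \emph{with positive probability}, whereas the theorem claims it almost surely; you would still need a self-similarity or zero--one argument to upgrade. The paper's argument is cleaner and goes straight to the almost sure statement: an optimal $\delta$-cover of $F^c$ by closed cubes must already cover all of $F$, because otherwise some retained cube $\pi(\bi)$ at a deep level would lie at positive distance from the cover, and by stochastic self-similarity (given $F^c\neq\varnothing$) that cube contains points of $F^c$ not covered. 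Hence $N_\delta(F^c)=N_\delta(F)$ and all box-type dimensions agree at once.

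On the strict Hausdorff gap, the proposed mechanism does not work. You suggest that a level-$n$ cube meeting a large component ``must be adjacent in the grid sense to another such cube,'' and that the expected number of children satisfying such an adjacency constraint is strictly below $pN^d$, yielding a subcritical branching process. This is false: for $p$ close to $1$ nearly every retained child has a retained neighbour, so the adjacency constraint costs essentially nothing, and the branching rate stays at $pN^d$. Yet the theorem holds for all $p<1$. The actual constraint exploited in \cite{BromanCamiaJoostenMeester_DimConnectedComponent} is stronger and of a different nature: a cube $\pi(\bi)$ at level $n$ that touches a component of diameter $\geq\varepsilon$ (with $n$ large relative to $\varepsilon$) forces that component to \emph{cross an annular shell} $B(\pi(\bi),N^{-n+1})\setminus B(\pi(\bi),3N^{-n})$. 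Crucially, this crossing event has probability $\varphi_{N,d}(p)<1$ (positivity of $\varphi_{N,d}(p)$ for $p\geq p_d$ is itself a nontrivial imported result of Broman--Camia), and the shells at consecutive construction levels are disjoint, which is what makes the crossing events independent across scales. This independence along a nested chain of shells is exactly what multiplies the per-level cost and produces the bound $\dim_{\mathrm{H}}F^{c,\varepsilon}\leq d+\log(p\varphi_{N,d}(p))/\log N<\dim_{\mathrm{H}}F$. Your ``auxiliary subcritical branching process'' is aspirational but does not identify this mechanism; nothing in grid-adjacency alone gives the needed independence or the strict subcriticality.

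On the almost sure constancy of $\dim_{\mathrm{H}}F^c$, the extinction-equation argument is in the right spirit (the paper itself only refers to the original article here), and the conclusion via the two roots of $q=(1-p+pq)^{N^d}$ is clean. However the crucial recursion $D=\max_{\bi\in Q_1}D^{(\bi)}$ is not justified by your reasoning. The claim ``otherwise $C$ would be a finite union of zero-dimensional sets'' is incorrect: a set can be totally disconnected and still have large Hausdorff dimension, $F^d$ itself being the relevant example. What is actually needed is a topological argument: by the boundary-bumping theorem, any point $x$ of a non-degenerate continuum $C\subseteq F$ that lies in the interior of a first-level cube $\pi(\bi)$ has a non-degenerate subcontinuum of $C$ inside $\pi(\bi)$ through $x$, so $x\in(F\cap\pi(\bi))^c$. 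Points of $F^c$ lying on the cube boundaries then have to be disposed of separately (they form a set of strictly smaller dimension). Without this, the inequality $D\leq\max_\bi D^{(\bi)}$ is unproven, and the extinction equation cannot be written down.
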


Recall from~\cref{thm:basicConn} and the remarks following that $p_d=p_c$ for $d=2$ and also for $N$
large enough in higher dimensions, furthermore,  $\mathbb{P}_{p_d}(F \text{ is not totally
disconnected})>0$  for all $d\geq 2$ and $N\geq 2$. The interesting phenomena here is the strict
dimension gap between the Hausdorff and box dimension of $F^c$ or another way of putting it is that
the Hausdorff dimension of the `dust' component $F^d$ is strictly larger than the Hausdorff
dimension of $F^c$. We note that the dimension gap also follows
from~\cite[Remark~11]{ChenOjalaRossiSuomala_FractalPercPorosity} using porosity arguments. Here we
sketch the proof of the argument from~\cite{BromanCamiaJoostenMeester_DimConnectedComponent}.

\begin{proof}[Sketch of proof]
We begin with the proof of $\dim_{\mathrm{B}} F^c = \dim_{\mathrm{B}} F$. The condition $p\geq
p_d(N)$ implies that almost surely either $F=\varnothing$ (in which case the claim is trivial) or
$F^c\neq \varnothing$. Distance between subsets $A,B\subset \mathbb{R}^d$ is defined the usual way
$d(A,B)\coloneqq \inf \{|x-y|:\, x\in A, y\in B\}$, where $|\cdot|$ denotes the Euclidean distance.
Start from a $\delta$-cover $\{B_i\}_{i=1}^{M_{\delta}}$ of $F^c$ using the minimal number
$M_{\delta}$ of closed cubes of side length $\delta$. For any point $x\in F$, if
$d(x,\bigcup_{i=1}^{M_{\delta}} B_i)=0$, then $x\in\bigcup_{i=1}^{M_{\delta}} B_i$ since all the
$B_i$ are closed. Assuming that $\{B_i\}_{i=1}^{M_{\delta}}$ is not a cover of $F$, there exists an
$x\in F$ such that $d(x,\bigcup_{i=1}^{M_{\delta}} B_i)>0$. Then there also exists a retained cube
$\pi(\bi)= f_{\bi}([0,1]^d)$ with $\bi\in Q_n$ for some $n$ large enough such that
$d(\pi(\bi),\bigcup_{i=1}^{M_{\delta}} B_i)>0$. Using stochastic self-similarity and the fact that
$F^c\neq \varnothing$ imply that $F^c\cap \pi(\bi)\neq \varnothing$. This contradicts that
$\{B_i\}_{i=1}^{M_{\delta}}$ is a $\delta$-cover of $F^c$, hence, $\{B_i\}_{i=1}^{M_{\delta}}$ is
actually also an optimal $\delta$-cover of $F$ as well which shows that $\dim_{\mathrm{B}} F^c =
\dim_{\mathrm{B}} F$.

Now let us turn to the proof of $\dim_{\mathrm{H}}  F^c<\dim_{\mathrm{H}} F$. Assuming $N\geq 5$ is
odd (the proof adapts to other values of $N$ as well), for any $\bi\in\Sigma_n$ and odd $k\leq N$,
let $B(\pi(\bi),kN^{-n})$ denote the cube concentric to $\pi(\bi)$ with side lengths $kN^{-n}$. The
argument relies on a result~\cite[Corollary~2.6]{BromanCamia_UniversalConnectivityFractalPercModels}
about the crossing probability of ``shells''. In our context, let $i\in\Sigma_1$ be such that
$(1/2,\ldots,1/2)\in \pi(i)$ and let us introduce $\varphi_{N,d}(p)$ to be the probability that
there exists a component in $F^c$ which crosses the shell $[0,1]^d\setminus B(\pi(i),3N^{-1})$. The
crucial result from~\cite{BromanCamia_UniversalConnectivityFractalPercModels} is that
$\varphi_{N,d}(p)>0$ whenever $p\geq p_d(N)$. Using statistical self-similarity this bound can be
``transferred'' to the crossing of a shell $B(\pi(\bi),N^{-n+1})\setminus B(\pi(\bi),3N^{-n})$ for
any $\bi\in\Sigma_n$ and $n\geq 1$. Note that $B(\pi(\bi),3N^{-n})$ and/or $B(\pi(\bi),N^{-n+1})$
need not be completely contained in $[0,1]^d$, but this does not alter the argument. 

For $\varepsilon>0$, let $F^{c,\varepsilon}$ denote the connected components in $F$ whose diameter
is at least $\varepsilon$. A simple consequence of the definition of Hausdorff dimension is that
$\dim_{\mathrm{H}}F^c=\sup _{\varepsilon} \dim_{\mathrm{H}}F^{c, \varepsilon}$. The proof is then
complete by the claim that 
\begin{equation}\label{eq:40}
	\dim_{\mathrm{H}}F^{c, \varepsilon} \leq d+\frac{\log \left(p \varphi_{N, d}(p)\right)}{\log
	N}<\dim_{\mathrm{H}} F \quad \text { a.s. }
\end{equation}
independent of $\varepsilon$. Hence, it remains to show~\cref{eq:40}. 

To motivate the specific cover of $F^{c,\varepsilon}$, let us make a few observations. For any
$\bi\in\Sigma_n$, the intersection $\pi(\bi)\cap (B(\pi(\bi),N^{-n+1})\setminus
B(\pi(\bi),3N^{-n}))=\varnothing$, therefore, if we define
\begin{equation*}
\mathcal{W}_n\coloneqq \{\pi(\bi):\, \bi\in Q_n \text{ and there is a component in } F^c \text{
crossing }  B(\pi(\bi),N^{-n+1})\setminus B(\pi(\bi),3N^{-n})\},
\end{equation*}
then by statistical self-similarity and independence between the two conditions determining whether
$\pi(\bi)\in\mathcal{W}_n$, wee see that
\begin{equation*}
\mathbb{P}_p\left( \pi(\bi)\in\mathcal{W}_n \right) \leq p \varphi_{N, d}(p)
\end{equation*} 
independent of $n$. The inequality is due to the fact that $B(\pi(\bi),3N^{-n})$ and/or
$B(\pi(\bi),N^{-n+1})$ need not be completely contained in $[0,1]^d$. Also note by construction that
two shells
\begin{equation*}
\big(B(\pi(\bi),N^{-n+1})\setminus B(\pi(\bi),3N^{-n})\big) \cap
\big(B(\pi(\bi|_{n-1}),N^{-n+2})\setminus B(\pi(\bi|_{n-1}),3N^{-n+1})\big) =\varnothing
\end{equation*}
for any $n\geq 2$, $\bi\in\Sigma_n$ where $\bi|_{k}=i_1,\ldots,i_k$ (for $k\leq n$). This implies
that the events $\pi(\bi)\in\mathcal{W}_n$ and $\pi(\bi|_{n-1})\in\mathcal{W}_{n-1}$ are
independent. 

Assuming that $F^c\neq \varnothing$, we fix $\varepsilon>0$ small enough such that
$F^{c,\varepsilon}\neq\varnothing$ and let $\ell=\ell(\varepsilon)$ be such that $N^{-\ell+1}\leq
\varepsilon/d$. For $n\geq \ell$ and $\bi\in\Sigma_n$, the key observation is that
\begin{equation*}
	\text{if }\; \pi(\bi) \cap F^{c,\varepsilon}\neq\varnothing \;\text{ then }
	\;\pi(\bi|_m)\in\mathcal{W}_m \text{ for all } m=\ell,\ell+1,\ldots,n.
\end{equation*}
Hence, we obtain a cover $\mathcal{V}_n$ of $F^{c,\varepsilon}$ for $n\geq \ell(\varepsilon)$ by setting
\begin{equation}\label{eq:41}
\mathcal{V}_n\coloneqq \{\pi(\bi):\, \bi\in\Sigma_n \text{ and } \pi(\bi|_m)\in\mathcal{W}_m \text{
for all } m=\ell,\ell+1,\ldots,n\}.
\end{equation}
Using our observations
\begin{equation*}
\mathbb{P}_p\left(\pi(\bi) \in \mathcal{V}_n\right) = \mathbb{P}_p\left(\bigcap_{m=l}^n \pi(\bi|_m)
\in \mathcal{W}_m\right) 
=\prod_{m=l}^n \mathbb{P}_p\left(\pi(\bi|_m) \in \mathcal{W}_m\right) 
\leq\left(p \varphi_{N, d}(p)\right)^{n-l+1}.
\end{equation*}
Let $\#\mathcal{V}_n$ denote the cardinality of $\mathcal{V}_n$. Using the cover $\mathcal{V}_n$ of
$F^{c,\varepsilon}$ we bound
\begin{equation*}
	\begin{aligned}
		\mathbb{E}_p\left(\mathcal{H}^s\left(F^{c, \varepsilon}\right)\right) & \leq \liminf
		_{n \rightarrow \infty} \mathbb{E}_p\bigg(\sum_{\pi(\bi) \in \mathcal{V}_n}
		|\mathrm{diam}\, \pi(\bi)|^s\bigg) \\
		& =\liminf _{n \rightarrow \infty}\big(\sqrt{d} N^{-n}\big)^s\,
		\mathbb{E}_p\left(\#\mathcal{V}_n\right) \\
		& \leq \liminf _{n \rightarrow \infty} d^{s / 2} N^{-s n}N^{dn}\big(p \varphi_{N,
		d}(p)\big)^{n-l+1} \\
		& =d^{s / 2} \big( p \varphi_{N, d}(p) \big)^{1-\ell} \lim _{n \rightarrow \infty}
		N^{n\big(d+\frac{\log (p \varphi_{N, d}(p))}{\log N}-s\big)}.
	\end{aligned}
\end{equation*}
Note that $\ell$ is fixed, so the proof of~\cref{eq:40} is complete since the limit is finite if and only if
\begin{equation*}
	s \geq d+\frac{\log \left(p \varphi_{N, d}(p)\right)}{\log N}.
\end{equation*}
For a proof of the final claim, we refer to~\cite{BromanCamiaJoostenMeester_DimConnectedComponent}.
\end{proof}

\begin{remark}
Since the cover $\mathcal{V}_n$ consists of cubes of the same diameter, one can easily argue that
the bound obtained in~\cref{eq:40} is also an upper bound for $\mathbb{E}_p\big(
\underline{\dim}_{\mathrm{B}} F^{c,\varepsilon} \big)$. So it is the ``small components'' of $F^c$
that actually determine its box dimension. However, $\dim_{\mathrm{B}}F^c\neq\sup _{\varepsilon}
\dim_{\mathrm{B}}F^{c, \varepsilon}$, hence, $\dim_{\mathrm{B}}F^c$ does not drop below
$\dim_{\mathrm{B}}F$. 
\end{remark}

\subsubsection{Assouad spectrum and intermediate dimensions of \texorpdfstring{$F^c$}{F^c}}

In light of~\cref{thm:DimConnComp} one could hope to gain more nuanced geometric insight about $F^c$
by considering its Assouad spectrum and intermediate dimensions. Here we prove that the Assouad
dimension of the connected component is maximal (\ie equal to the ambient space dimension $d$) while
the Assouad spectrum and $\theta$-intermediate dimension are trivial and equal to the box dimension of the dust.
We are not aware of this having been attempted elsewhere.
\begin{theorem}\label{thm:DimAssouadSpectrumConnComp}
Almost surely and conditioned on non-extinction, for any dimension function $\phi$, 
\begin{equation*}
  \dim_{\mathrm{A}}^{\phi} F^c = \dim_{\mathrm{A}}^{\phi} F.
\end{equation*}
\end{theorem}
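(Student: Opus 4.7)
The plan is to show that almost surely on non-extinction $\overline{F^c}=F$, and then invoke closure-invariance of the $\phi$-Assouad spectrum. Closure-invariance, $\dim_{\mathrm{A}}^{\phi}X=\dim_{\mathrm{A}}^{\phi}\overline{X}$ for any $X\subseteq\bbR^d$, is a standard fact: finitely many closed $r$-balls covering $X$ automatically cover $\overline{X}$, so $N_r(X)=N_r(\overline{X})$ for every $r>0$; moreover the centre $x\in X$ in the definition of $\dim_{\mathrm{A}}^{\phi}$ may be perturbed from $\overline{X}$ into $X$ at negligible cost. Together with $F^c\subseteq F$ this reduction yields
\[
  \dim_{\mathrm{A}}^{\phi}F^c = \dim_{\mathrm{A}}^{\phi}\overline{F^c} = \dim_{\mathrm{A}}^{\phi}F.
\]

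To prove $\overline{F^c}=F$ almost surely on $\{F\neq\varnothing\}$, I would fix a countable basis $\{U_j\}_{j\in\bbN}$ of open subsets of $[0,1]^d$. Since $F$ is closed, $\overline{F^c}\subseteq F$ is automatic, so it suffices to verify, for each $j$,
\[
  \mathbb{P}\bigl(F\cap U_j\neq\varnothing,\; F^c\cap U_j=\varnothing\bigr)=0.
\]
Fix $j$ and work on $\{F\cap U_j\neq\varnothing\}$. For $n_0$ large enough, any $y\in F\cap U_j$ lies in a retained cube $\pi(\bi_0)$ with $\bi_0\in Q_{n_0}$ and $\pi(\bi_0)\subset U_j$. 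By stochastic self-similarity the sub-process rooted at $\bi_0$ is an independent scaled copy of the full percolation, and it survives since $y\in F\cap\pi(\bi_0)$. Standard Galton--Watson theory then guarantees that the generation size $K_k$ of retained descendants of $\bi_0$ at level $n_0+k$ satisfies $K_k\to\infty$ almost surely. Conditional on the configuration through level $n_0+k$, the sub-processes rooted at these $K_k$ descendants are mutually independent copies of the full percolation, and each satisfies $F^c\cap\pi(\bj)\neq\varnothing$ with probability $\theta^{\ast}\coloneqq\mathbb{P}_p(F^c\neq\varnothing)$. The assumption $p\geq p_d$, combined with~\cref{thm:basicConn} and the Broman--Camia result recorded in the remarks after it, gives $\theta^{\ast}>0$. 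Consequently
\[
  \mathbb{P}\bigl(F^c\cap U_j=\varnothing\,\big|\,\mathcal{F}_{n_0+k}\bigr)\leq(1-\theta^{\ast})^{K_k}\xrightarrow{k\to\infty}0
\]
on the survival event, where $\mathcal{F}_{n_0+k}$ denotes the $\sigma$-algebra generated by the process up to level $n_0+k$; dominated convergence and a countable union over $j$ then finish the density statement.

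The principal obstacle is the bookkeeping needed to set up the Borel--Cantelli step: one must verify that the sub-processes rooted at distinct level-$(n_0+k)$ descendants of $\bi_0$ are genuinely independent, which relies on the product structure of $\Omega$ and the precise meaning of stochastic self-similarity, and that $K_k\to\infty$ almost surely on survival, which is a standard consequence of supercriticality ($m=pN^d>1$). Once these ingredients are in place, closure-invariance delivers the conclusion simultaneously for every admissible dimension function $\phi$, a feature that would be significantly harder to achieve via a direct covering argument of the type used for $\dim_{\mathrm{B}}F^c=\dim_{\mathrm{B}}F$ in the proof of~\cref{thm:DimConnComp}.
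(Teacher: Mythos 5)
Your proof is correct and takes a genuinely different, more conceptual route than the paper. The paper proves the lower bound $\dim_{\mathrm{A}}^{\phi}F^c\geq\dim_{\mathrm{A}}^{\phi}F$ directly: it reduces, via results of \cite{BanajiRutarTroscheit24}, to the one-parameter family $\phi(R)=(1/\alpha)\psi(R)$ and then runs a quantitative Borel--Cantelli-for-trees argument, modifying the ``thick configuration'' events $E_k$ so that at least a $\theta(p)/2$-fraction of the $m^{tn(k)}$ surviving subcubes additionally contain a non-trivial connected component. Your route instead factors the claim into (i) the topological statement $\overline{F^c}=F$ almost surely on non-extinction, proved by a soft Galton--Watson and spatial-independence argument, and (ii) closure-invariance of the $\phi$-Assouad spectrum, a purely deterministic fact whose verification uses the monotonicity of $R\mapsto\phi(R)\log(1/R)$ built into the definition of a dimension function. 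This cleanly delivers all $\phi$ simultaneously from a single almost-sure event and sidesteps the quantitative machinery entirely. It is also worth noting that the density $\overline{F^c}=F$ is implicitly present in the paper's box-dimension step for Theorem~\ref{thm:DimConnComp} (where ``stochastic self-similarity and $F^c\neq\varnothing$ imply $F^c\cap\pi(\bi)\neq\varnothing$''), so your argument in effect isolates that observation and recognises that, paired with closure-invariance, it already yields the Assouad-spectrum result. The bookkeeping you flag is genuine but routine: the realisation-dependent choice of $\pi(\bi_0)$ is most tidily handled by fixing $\bi\in\Sigma_*$ with $\pi(\bi)\subset U_j$, showing $\mathbb{P}\bigl(\bi\in Q_{|\bi|},\ F_{\bi}\neq\varnothing,\ F^c\cap\pi(\bi)=\varnothing\bigr)=0$ via conditioning on the subtree generation sizes and using $\mathbb{P}_p(F^c\neq\varnothing)>0$ for $p\geq p_d$, then taking a countable union over $\bi$ and $j$.
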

Recall from~\cref{thm:DimMandelbrotPerc} the value of $\dim_{\mathrm{A}}^{\phi} F$. In particular,
for $\phi(R) = 1/\theta-1$ with $\theta\in (0,1]$, the $\phi$-Assouad spectrum is the
$\theta$-Assouad spectrum and it coincides with $\dim_{\mathrm{H}} F=\dim_{\mathrm{B}} F$.
\begin{proof}
  Given a dimension function $\psi$, we always have the inequality $\dim_{\mathrm{A}}^{\psi} F^c
  \leq
  \dim_{\mathrm{A}}^{\psi} F$ by monotonicity of the generalised Assouad dimension.
  Further, by \cite[Corollary 2.2]{BanajiRutarTroscheit24} and \cite[Corollary
  H]{BanajiRutarTroscheit24}, it remains to prove that
  \[
    \dim_{\mathrm{A}}^\phi F^c \geq \alpha \frac{\log(1/p)}{d\log^2 N}+\frac{\log p N^d}{\log N}
  \]
  for $\phi(R) = (1/\alpha)\log\log(1/R)/\log(1/R)$, where $\alpha\in(0,\log N^d]$. This will also
  follow from \cite[Theorems F \& G]{BanajiRutarTroscheit24} with a minor modification. We will
  sketch only where the proof differs from that of Theorem G and the reader is referred to
  \cite{BanajiRutarTroscheit24} for the full derivation of the dimension of $F$.

  Let $\theta(p)>0$ be
  the probability that the limit set contains a connected component. 
  Let $m=p N^d$ be the expected number of descendants and let $\gamma>1$ be the unique exponent such
  that $m^\gamma=N^{d}$.
  Let $1<t<\gamma$ and let $E_k$ be the event that more than $m^{t n(k)}$ subcubes $C_i$ are retained at
  level $n(k)$, of which at least $\theta(p)/2\cdot m^{t n(k)}$ many contain a connected component. 
  By \cite[Theorem F]{BanajiRutarTroscheit24} and Hoeffding's inequality, for arbitrarily small $\eps>0$,
  \begin{align*}
    P_k &:=\bbP(E_k) 
    \gtrsim
    \exp\left(-m^{(t-1+\eps)\tfrac{\gamma}{\gamma-1}n(k)}\right)
    \cdot \bbP\left(\#\{C_i\} \geq \theta(p)/2\cdot m^{t n(k)}\right)
    \\
	&\geq\exp\left(-m^{(t-1+\eps)\tfrac{\gamma}{\gamma-1}n(k)}\right)
	\cdot \left(1-\exp\left(\tfrac12 \theta(p)^2 m^{t n(k)}\right)\right)
	\\
	&\gtrsim\exp\left(-m^{(t-1+\eps)\tfrac{\gamma}{\gamma-1}n(k)}\right),
  \end{align*}
  where $\gtrsim$ refers to the lower bound holding up to a uniform multiplicative constant.
  Now let $n(k) = (1/\alpha) \log k$ and $t$ be such that $(t-1+\eps)/\alpha\tfrac{\gamma}{\gamma-1} \log m <1$.
  We get
  \begin{align*}
    m^k P_k &\gtrsim
    m^k \exp \left(-m^{(t-1+\eps)/\alpha\tfrac{\gamma}{\gamma-1}\log k}\right)
    \\
	&=
    \exp\left(k \log m -k^{(t-1+\eps)/\alpha\tfrac{\gamma}{\gamma-1}\log m}\right)
    \\
	&\gtrsim \exp\left(k \log m\right) = m^k.
  \end{align*}
  Since $m^k P_k$ increases exponentially in $k$, we can use a Borel-Cantelli lemma for trees, see
  \cite[Lemma 3.6 (ii)]{BanajiRutarTroscheit24} and obtain that $E_k$ occurs infinitely often almost
  surely, conditioned on non-extinction, as a subtree in $F$. By construction, we further have these
  subtrees are also in $F^c$.
  But then there exist $m^{tn(k)}$ subcubes in $F^c$ at level $k+n(k)$ with side-length $N^{-(k+n(k))}$.
  Since further $n(k) = \log k$, we get that the $\phi$-Assouad dimension of $F^c$ is at least $\log
  (m^t)/\log N$.
  Now our condition on $t$ gives
  \[
    1>\frac{t-1+\eps}{\alpha}\frac{\gamma}{\gamma-1} \log m
  \]
  and so we can choose
  \[
    t=\alpha \frac{\gamma -1}{\gamma}\frac{1}{\log m}+1-2\eps 
  \]
  and
  \begin{align*}
    \dim_{\mathrm{A}}^\phi F^c 
    &\geq \frac{\log m}{\log N}\left(\alpha \frac{\gamma
    -1}{\gamma}\frac{1}{\log m}+1-2\eps \right)
    \\
    & = \alpha \frac{\log(1/p)}{d\log^2 N}+\frac{\log p N^d}{\log N}-2\eps\frac{\log pN^d}{\log N}.
  \end{align*}
  But as $\eps$ was arbitrary, we get the required conclusion.
\end{proof}
We now show that $\dim_{\theta} F^c$ is also
trivial and coincides with the dimension of the
dust for all $\theta\in(0,1]$. It is possible to construct sets with constant $\theta$-intermediate
dimension if $\dim_{\theta}X\in\{0,\dim_{\mathrm{L}}X,\dim_{\mathrm{A}}X\}$, where
$\dim_{\mathrm{L}}X$ denotes the lower dimension of $X$. To the best of our knowledge this is the
first example which does not fall in this mentioned category, yet the $\theta$-intermediate
dimension is constant. 
\begin{theorem}\label{thm:DimIntermediateSpectrumConnComp}
For any $d\geq 2$, $N\geq 2$ and $p\geq p_d(N)$, almost surely and conditioned on non-extinction, 
\begin{equation*}
\dim_{\mathrm{H}}F^c=\dim_{\theta=0} F^c  < \dim_{\theta} F^c = \dim_{\mathrm{B}} F^c \;\text{ for every } \theta\in(0,1].
\end{equation*}
\end{theorem}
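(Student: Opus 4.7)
The plan has two main parts. The easy part is immediate: by definition of the intermediate dimension, $\dim_{\theta=0} F^c = \dim_{\mathrm{H}} F^c$, and \cref{thm:DimConnComp} provides the strict inequality $\dim_{\mathrm{H}} F^c < \dim_{\mathrm{B}} F^c$. The upper bound $\dim_\theta F^c \leq \dim_{\mathrm{B}} F^c$ for every $\theta \in (0,1]$ is immediate from the fact that $\theta \mapsto \dim_\theta$ is monotone and interpolates to the upper box dimension at $\theta = 1$. Hence the substantial content of the statement is the matching lower bound
\[
  \dim_\theta F^c \geq \dim_{\mathrm{B}} F^c \quad \text{for every } \theta \in (0,1].
\]

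The central ingredient, which I would isolate as an auxiliary lemma and which is already implicit in the sketch of \cref{thm:DimConnComp}, is the statement
\[
  F \subseteq \overline{F^c} \quad \text{almost surely, conditioned on non-extinction.}
\]
The plan for this lemma is as follows. For each finite word $\bj \in \Sigma_*$, statistical self-similarity says that, conditionally on $\pi(\bj)\cap F\neq\varnothing$, the restriction of the process to the cylinder $\pi(\bj)$ is an independent rescaled copy of Mandelbrot percolation. For $p \geq p_d$ one has $F^c \neq \varnothing$ almost surely conditional on $F \neq \varnothing$, a fact already exploited inside the proof of \cref{thm:DimConnComp}; applied to the copy inside $\pi(\bj)$, this yields $\pi(\bj) \cap F^c \neq \varnothing$ almost surely whenever $\pi(\bj) \cap F \neq \varnothing$. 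Countable additivity over $\bj \in \Sigma_*$ then promotes this to a single almost-sure event on which every retained cylinder meets $F^c$. For any $x \in F$ with symbolic coding $\bi$, the shrinking cylinders $\pi(\bi|_n) \ni x$ each contain a point of $F^c$, producing a sequence in $F^c$ converging to $x$, so $x \in \overline{F^c}$.

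Given the lemma, the lower bound follows formally from three standard properties of the intermediate dimension. By closure stability for bounded sets, established in \cite[Proposition~2.5]{FalconerFraserKempton_IntermedDim} for both the upper and the lower $\theta$-intermediate dimension, $\dim_\theta F^c = \dim_\theta \overline{F^c}$. By set monotonicity of $\dim_\theta$ together with $F \subseteq \overline{F^c}$, we have $\dim_\theta F \leq \dim_\theta \overline{F^c}$. Finally, \cref{thm:DimMandelbrotPerc} gives $\dim_{\mathrm{H}} F = \dim_{\mathrm{B}} F$, so the intermediate dimensions of $F$ are squeezed to this common value and $\dim_\theta F = \dim_{\mathrm{B}} F$ for every $\theta$. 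Combined with $\dim_{\mathrm{B}} F = \dim_{\mathrm{B}} F^c$ from \cref{thm:DimConnComp}, this gives
\[
  \dim_\theta F^c = \dim_\theta \overline{F^c} \geq \dim_\theta F = \dim_{\mathrm{B}} F = \dim_{\mathrm{B}} F^c,
\]
as required.

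The main obstacle is the auxiliary lemma, and specifically the promotion of the pointwise-in-$\bj$ almost-sure statement to a single almost-sure event valid simultaneously for every retained cylinder; this is where countability of $\Sigma_*$ and the a.s.~dichotomy $\{F \neq \varnothing\} = \{F^c \neq \varnothing\}$ for $p \geq p_d$ are both essential, and both are at the heart of \cref{thm:DimConnComp}. Once the lemma is in hand, the remainder of the argument is purely formal, appealing only to closure-stability and monotonicity of the intermediate dimension and to the coincidence of the Hausdorff and box dimensions of Mandelbrot percolation.
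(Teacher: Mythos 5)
Your proof is correct but takes a genuinely different --- and considerably cleaner --- route than the paper. The paper proves the lower bound $\underline{\dim}_\theta F^c \geq \dim_{\mathrm{B}} F^c$ directly by constructing a measure $\mu_K$ supported on a carefully filtered family of level-$K$ squares with a `uniformly expected branching' ($(\gamma,\psi)$-good) property, controlled via Chernoff bounds for the underlying Galton--Watson process, and then invoking the mass distribution principle for intermediate dimensions. Your argument replaces this explicit construction with a single topological observation, namely that $F\subseteq\overline{F^c}$ almost surely conditioned on non-extinction (equivalently $\overline{F^c}=F$ since $F$ is compact); this is exactly the content of the paper's own box-counting argument in the sketch of \cref{thm:DimConnComp}, which shows that every retained cylinder meets $F^c$, and your promotion to a single a.s.\ event via countability of $\Sigma_*$ is sound. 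The remaining steps are purely formal: closure stability of $\dim_\theta$ for $\theta>0$ (which holds precisely because the lower bound $\Phi(\delta)>0$ on diameters forces admissible covers to be finite, so closures can be taken termwise), set monotonicity, and the squeeze $\dim_{\mathrm{H}}F\leq\underline{\dim}_\theta F\leq\overline{\dim}_{\mathrm{B}}F$ combined with $\dim_{\mathrm{H}}F=\dim_{\mathrm{B}}F$ from \cref{thm:DimMandelbrotPerc} and $\dim_{\mathrm{B}}F=\dim_{\mathrm{B}}F^c$ from \cref{thm:DimConnComp}. The failure of closure stability at $\theta=0$ is also why the theorem asserts a jump there, so your route makes the structure of the statement transparent: it really is density of $F^c$ in $F$ plus elementary properties of $\dim_\theta$. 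What the paper's measure-theoretic construction buys, which your abstract argument does not, is quantitative control on the parameters $\alpha$, $\gamma$, $\psi$ that could be useful for attacking the finer $\Phi$-intermediate interpolation question in \cref{conj:dimfunct}; for the stated theorem, however, your argument is shorter and conceptually preferable.
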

\begin{proof}	
Define $\delta\coloneqq \dim_{\mathrm{B}} F^c=d+\frac{\log p}{\log N}$, fix $\theta\in(0,1)$ and let
$K\in\mathbb{N}$
which indexes the level of the construction. Since $\overline{\dim}_{\theta} F^c\leq \delta$, it is
enough to show that $\underline{\dim}_{\theta}F^c\geq \delta$. Recall the notation of $F_K$
from~\cref{eq:11}. The
objective is to define a measure $\mu_K$ as the sum of point masses supported on a carefully chosen
subset of $F_K$, to which we can apply the mass distribution
principle~\cite[Proposition~2.2]{FalconerFraserKempton_IntermedDim} for intermediate dimensions.
Choose $0<\varepsilon_0\ll N^{-K}$. Recall $F^{c,\varepsilon_0}\subset F^c$ is the subset with
connected components of diameter at least $\varepsilon_0$. To cover $F^{c,\varepsilon_0}$ with level
$\theta K$ squares (when talking about levels, always interpret a real number as its lower integer
part), we use the cover $\mathcal{V}_{\theta K}$ defined in~\cref{eq:41}. In expectation,
$\#\mathcal{V}_{\theta K}$ is bounded from above by $C\cdot N^{\theta K (\delta+\log
\varphi_{N,d}(p) / \log N)}$. Therefore, the number of level $\theta K$ squares needed to cover
$F^c\setminus\mathcal{V}_{\theta K}$ is still bounded from below by $C\cdot N^{\theta K
(\delta-\alpha)}$ for some $\alpha>0$ which tends to 0 as $K\to\infty$ since
$\dim_{\mathrm{B}}F^c=\delta$ almost surely. Note that throughout the proof all uniform constants
are simply denoted by $C$ even though they may be different.

Define
\begin{equation*}
\mathcal{F}_{\theta K}\coloneqq \{ \pi(\bi):\, \bi\in\Sigma_{\theta K} \;\;\&\;\; \pi(\bi)\cap
F^c\setminus\mathcal{V}_{\theta K} \neq \varnothing \;\;\&\;\; \pi(\bi)\cap \mathcal{V}_{\theta K} =
\varnothing\},
\end{equation*}
\ie those level $\theta K$ squares which are needed to cover $F^c\setminus\mathcal{V}_{\theta K}$
but do not share a boundary with any square in $\mathcal{V}_{\theta K}$. Squares in
$\mathcal{F}_{\theta K}$ may still intersect each other on their boundary. We further discard
squares from $\mathcal{F}_{\theta K}$ so that in $\mathcal{F}_{\theta K}$ we are left with disjoint
squares with maximal cardinality. We still have $\#\mathcal{F}_{\theta K}\geq C\cdot N^{\theta K
(\delta-\alpha)}$. The choice of $\varepsilon_0$ and statistical self-similarity imply that in
distribution $F^c\cap \pi(\bi)$ are all independent rescaled Mandelbrot percolations conditioned to
have non-trivial connected components of (strictly positive) diameter at most $\varepsilon_0\cdot
N^{\theta K}$. Independence comes from the disjointness of the squares and that $\varepsilon_0\ll
N^{-K}$.   

We claim that for every $p\geq p_d$ and $\eta>0$,
\begin{equation}\label{eq:42}
\mathbb{P}_p(\text{largest component of } F^c \text{ has diameter} \in(0,\eta]) >0.
\end{equation}
Indeed, define $L_0$ to be the smallest integer so that $N^{-L_0}\leq\eta/\sqrt{d}$. Consider the
configuration where $F_{L_0}$ consists of a single level $L_0$ square. This configuration has
positive probability. Recall that $\mathbb{P}_{p_d}(F \text{ is not totally disconnected})>0$ for
all $d\geq 2$ and $N\geq 2$. Therefore, by statistical self-similarity, within this one square there
is strictly positive probability of having a non-trivial connected component, which shows that the
probability in~\cref{eq:42} is indeed positive.

Hence, claim~\cref{eq:42} and~\cref{thm:DimConnComp} imply that almost surely
$\dim_{\mathrm{B}}(\pi(\bi)\cap F^c)=\delta$ for all $\pi(\bi)\in\mathcal{F}_{\theta K}$. In
particular, at least $C\cdot N^{(1-\theta) K(\delta-\alpha)}$ level $K$ squares are needed to cover
$F^c$ within each square $\pi(\bi)\in\mathcal{F}_{\theta K}$. As a result, the total number of level
$K$ squares needed to cover $F^c\cap \mathcal{F}_{\theta K}$ is bounded from below by $C\cdot
N^{K(\delta-\alpha)}$. Denote these level $K$ squares by $\widetilde{\mathcal{F}}_K$.

The goal is to put point masses on a positive proportion of the squares in
$\widetilde{\mathcal{F}}_K$ which have the following `uniformly expected branching' property. Let $\gamma>0$ and $\psi\in(\theta,1)$. We say that
$\pi(\bi)\in \widetilde{\mathcal{F}}_K$ is $(\gamma,\psi)$-good if 
\begin{equation*}
\#\big\{\pi(\bj)\in\widetilde{\mathcal{F}}_K:\, \pi(\bj)\subset \pi(\bi|_{L})\big\} \leq c\cdot
N^{(K-L)(\delta+\gamma)} \text{ for every level } L\in[\theta K, \psi K]
\end{equation*}
for some fixed constant $c\geq 1$. Denoting $A_K\coloneqq\big\{ \pi(\bi)\in
\widetilde{\mathcal{F}}_K:\, \pi(\bi) \text{ is } (\gamma,\psi)\text{-good} \big\}$, we claim that
\begin{equation}\label{eq:43}
\#A_K \geq C\cdot N^{K(\delta-\alpha)}
\end{equation}
for some random $C$ which, conditioned on non-extinction, is bounded away from $0$ almost surely.

Indeed, let $Z_k$ be a finitely supported Galton--Watson process with mean offspring $\bbE(Z_1)=N^\delta$.
For all $\gamma>0$ and $c_1>0$, a Chernoff bound gives
\[
  \bbP\big(Z_k \geq N^{(\delta+\gamma)k}\big) =
  \bbP\big(\exp(c_1 Z_k) \geq \exp(c_1 N^{(\delta+\gamma)k})\big) \leq \exp(-c_1 N^{(\delta+\gamma)k})\cdot
  \bbE\big(\exp(c_1 Z_k)\big).
\]
By \cite[Theorem 4]{Athreya94}, see also the discussion in \cite[Section
3]{Troscheit_QuasiADimStochSelfSim}, there exists a particular value of $c_1$ for which the
expectation $\sup_k \bbE\big(\exp(c_1 Z_k)\big)\leq c_2<\infty$. Since the retained squares
(starting at any level) form a Galton--Watson process with mean offspring $N^{\delta}$, we obtain
the bound 
\[
  \bbP_p\big(\#\big\{\pi(\bj)\in\widetilde{\mathcal{F}}_K:\, \pi(\bj)\subset \pi(\bi|_{L})\big\} >
  N^{(\delta+\gamma)(K-L)}\big)
  \leq c_2\cdot \exp(-c_1 N^{(\delta+\gamma)(K-L)})
\]
for every $\pi(\bi)\in\widetilde{\mathcal{F}}_K$ and $L<K$.
Using a union bound, the existence of a not $(\gamma,\psi)$-good
$\pi(\bi)\in\widetilde{\mathcal{F}}_K$ can be bounded from above by 
\begin{align*}
  &\bbP_p\big(\exists \pi(\bi)\in\widetilde{\mathcal{F}}_K,\;\exists L\in [\theta K,\psi K] \text{ such that
  }\#\big\{\pi(\bj)\in\widetilde{\mathcal{F}}_K:\, \pi(\bj)\subset \pi(\bi|_{L})\big\} > N^{(\delta+\gamma)(K-L)} \big)
  \\
  &\phantom{x}\leq \sum_{\bi\in\Sigma_K}\sum_{L=\theta K}^{\psi K} c_2\cdot \exp(-c_1 N^{(\delta+\gamma)(K-L)})
  \leq N^{dK} \sum_{L=\theta K}^{\psi K} c_2\cdot \exp(-c_1 N^{(\delta+\gamma)(K-L)})
  \\
  &\phantom{x}\leq C N^{dK} \exp(-c_1 N^{(\delta+\gamma)(1-\psi)K})
\end{align*}
which is summable in $K$. Hence, almost surely, the bound \cref{eq:43} is satisfied for large enough
$K$ or small enough  positive constant $C$, almost surely.

Having established~\cref{eq:43}, we now define the measure
\begin{equation*}
	\mu_K(\cdot) \coloneqq \sum_{\pi(\bi)\in A_K} N^{-K(\delta-\alpha)} \chi_{x_{\pi(\bi)}}(\cdot),
\end{equation*}
where $\chi_{x_{\pi(\bi)}}$ is the indicator function at the point $x_{\pi(\bi)}\in\pi(\bi)\cap
F^c$. We want to apply the mass distribution
principle~\cite[Proposition~2.2]{FalconerFraserKempton_IntermedDim} with the measures $\mu_K$.
Using~\cref{eq:43} we have $\mu_K(F^c)\geq C>0$. We need to bound $\mu_K(U)$ from above for any set
$U$ with diameter $|U|\in[N^{-K},N^{-\theta K}]$.

First assume that $|U|=N^{-\eta K}\in[N^{-K},N^{-\psi K}]$. Since $U$ can intersect at most $C\cdot
N^{d\cdot (1-\eta)K}$ many points in the support of $\mu_K$, we can bound
\begin{equation*}
\mu_K(U) \leq C\cdot N^{d\cdot (1-\eta)K}\cdot N^{-K(\delta-\alpha)} =
C\cdot|U|^{(\delta-\alpha)/\eta-(1/\eta-1)d} \leq C\cdot|U|^{\delta-\alpha-(1/\psi-1)d}.
\end{equation*}
On the other hand, if $|U|=N^{-\eta K}\in(N^{-\psi K},N^{-\theta K}]$, then using the fact that all
$\pi(\bi)\in A_K$ are $(\gamma,\psi)$-good, we can bound
\begin{equation*}
\mu_K(U) \leq C\cdot N^{(1-\eta)K(\delta+\gamma)} \cdot N^{-K(\delta-\alpha)} = C\cdot
|U|^{\delta-\gamma(1/\eta-1)-\alpha/\eta} \leq C\cdot |U|^{\delta-\gamma(1/\theta-1)-\alpha/\theta}.
\end{equation*}
Now applying~\cite[Proposition~2.2]{FalconerFraserKempton_IntermedDim}, we have shown that
\begin{equation*}
\underline{\dim}_{\theta}F^c\geq \min\big\{ \delta-\alpha-(1/\psi-1)d, \delta-\gamma(1/\theta-1)-\alpha/\theta \big\}.
\end{equation*}
The parameters $\alpha$ and $\gamma$ can be made arbitrarily close to $0$ while $\psi$ arbitrarily
close to $1$ by choosing $K$ larger to begin with. Hence, we conclude that
$\underline{\dim}_{\theta}F^c\geq \dim_{\mathrm{B}} F^c$.
\end{proof}

When the $\theta$-intermediate dimension is discontinuous at $\theta=0$, Banaji showed
in~\cite[Theorem 6.1]{Banaji_GeneralisedIntermedDim} that for any compact set $X$ that there is a
family of functions $\{\Phi_s:\, s\in[\dim_{\mathrm{H}}X,\overline{\dim}_{\mathrm{B}}X]\}$ for which
the $\Phi$-intermediate dimensions interpolate all the way between $\dim_{\mathrm{H}}X$ and
$\overline{\dim}_{\mathrm{B}}X$. One can ask for which threshold function this occurs, see
\cref{ques:theta} and \cref{conj:dimfunct}.
Covering the really small non-trivial connected components efficiently is the challenging part. Even
partial answers could provide explicit bounds on the unknown value of $\beta(p)$
in~\cref{thm:DimConnComp}.

\subsection{Regularity of percolating paths}

Meester~\cite[Theorem~3.1]{Meester_ConnectivityFractalPerc} showed that arc percolation and
percolation are probabilistically equivalent to each other for planar Mandelbrot percolation. We now
elaborate on what is known about the regularity properties of these percolating paths.

A natural initial question is whether a percolating path can be the graph of a (monotone) function.
Let $\Gamma:[0,1]\to[0,1]^2$ be a continuous path denoted $\Gamma(t)=(\gamma_1(t),\gamma_2(t))$. We
say that the image $\Gamma[0,1]$ is a directed crossing of $[0,1]^2$ if $\gamma_1(0)=0$, the
function $\gamma_1$ is non-decreasing and $\gamma_1(1)=1$. If in addition $\gamma_2$ is also
non-decreasing, then we call $\Gamma[0,1]$ a north-east oriented crossing. The term `stiff' is also
used for directed percolation. It turns out that directed percolation is not possible.

\begin{theorem}[{\cite{Chayes_NoDirectedFractalPerc95,
  ChayesPemantlePeres_DirectedFractalPerc97}}]\label{thm:OrientedPerc}
Consider planar Mandelbrot percolation with level dependent retention probabilities, \ie there is a
sequence $p^{(n)}$ such that $ p_{\bi}=p^{(n)}$ for all $\bi\in\Sigma_n$. If $\prod_{n=1}^\infty
p^{(n)}=0$, then almost surely $F$ does not contain a directed crossing of $[0,1]^2$.
\end{theorem}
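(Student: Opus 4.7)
The plan is to start by observing that a directed crossing $\Gamma$ forces $\gamma_1([0,1])=[0,1]$, since $\gamma_1$ is continuous, non-decreasing and has boundary values $0$ and $1$. Consequently $\proj_x(F) \supseteq \proj_x(\Gamma)=[0,1]$, so at every level $n$ each of the $N^n$ vertical columns of width $N^{-n}$ must contain at least one retained level-$n$ cube. A finer necessary condition arises from continuity of $\Gamma$ across each column boundary $x=j/N^n$: the path's $y$-value at that boundary lies in the closure of a retained cube on both sides, forcing a common row in which the level-$n$ cubes in columns $j$ and $j+1$ are simultaneously retained. Thus I obtain a cascade of ``matching row'' necessary conditions, one at every scale.

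Next I would exploit stochastic self-similarity to bound the probability of this cascade. Conditioning on the level-1 retention pattern, the restriction of the percolation to each retained level-1 cube is an independent rescaled copy driven by the shifted probabilities $p^{(2)},p^{(3)},\ldots$, and a directed crossing of $F$ induces directed sub-crossings of the chain of level-1 cubes that it must traverse. This yields a recursive inequality for $P^{(m)}:=\bbP(F^{(m)} \text{ contains a directed crossing})$, where $F^{(m)}$ denotes the percolation with probabilities $p^{(m)},p^{(m+1)},\ldots$. Iterating $n$ times should produce a bound of the form $P^{(1)} \leq C_n \prod_{k=1}^n p^{(k)}$, modulo finite combinatorial overhead, and under the hypothesis $\prod_{k=1}^\infty p^{(k)}=0$ the right-hand side vanishes as $n\to\infty$, forcing $P^{(1)}=0$ as required.

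The main obstacle lies in setting up this recursion sharply enough that the clean condition $\prod p^{(k)}=0$ actually suffices. A naive recursion, that demanded retention of every level-1 cube a crossing could visit, would instead produce a much stronger condition such as the ``empty interior'' criterion $\prod_k (p^{(k)})^{N^{2k}}=0$ of \cref{thm:FatPerc}. The directedness of $\Gamma$ must be used to severely restrict the combinatorial freedom: a directed path is essentially a single monotone curve and need not fill out the whole cube, so at each level only one level-1 cube's worth of retention probability should enter the bound. Carefully accounting for the overhead coming from vertical excursions of the path within a column (which may traverse more than one level-1 cube) and from the choice of transition row at each column boundary---and verifying that this overhead does not overwhelm the decay from $\prod p^{(k)}=0$---is what I expect to be the technical crux of the argument.
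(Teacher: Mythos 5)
Your proposal takes a genuinely different route from the Chayes--Pemantle--Peres argument the paper sketches, and it contains a gap that you yourself flag but which is not merely technical. In the paper the argument is dual in spirit: one extends the process to the half-strip $[0,1]\times[0,\infty)$ and constructs a \emph{contour} out of \emph{discarded} squares that blocks any north-east oriented path; the key input is a first-moment estimate showing the expected height of such a contour can be made arbitrarily small by building it from higher-level squares, after which a counting argument rules out general directed crossings. No recursion on crossing probabilities appears, and crucially no per-level combinatorial overhead accumulates.

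The obstacle you identify in your recursion is fatal as stated. The claim that ``at each level only one level-1 cube's worth of retention probability should enter the bound'' is incorrect: since $\gamma_1$ is continuous, non-decreasing, with $\gamma_1(0)=0$ and $\gamma_1(1)=1$, a directed crossing passes through every vertical line, hence must touch at least one retained level-$1$ cube in \emph{every} column, i.e.\ at least $N$ retained level-$1$ cubes. So the per-level contribution is at least $(p^{(1)})^{N}$, not $p^{(1)}$. Worse, any union bound over the combinatorial choices (which rows serve as column-transitions, how far the path excursions run vertically) introduces a multiplicative factor $C_n\ge C^n$ with $C>1$, and a bound of the form $C^n\prod_{k\le n}p^{(k)}$ need not vanish under the sole hypothesis $\prod_k p^{(k)}=0$: with $p^{(k)}=1-1/k$ one has $\prod_{k\le n}p^{(k)}=\Theta(1/n)$, yet $C^n/n\to\infty$. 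Obtaining the sharp condition $\prod p^{(k)}=0$ requires an argument that does not lose a per-level factor, which is exactly what the contour construction on discarded squares delivers and what a retained-square recursion of this form cannot. As written, your approach would only prove the statement under a much stronger decay assumption on the $p^{(k)}$, closer to the fat-percolation criteria than to the theorem claimed.
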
  
Recall that if $\prod_{n=1}^\infty p^{(n)}>0$, then the Lebesgue measure of $F$ is strictly positive
almost surely. The theorem was initially proved by Chayes~\cite{Chayes_NoDirectedFractalPerc95} for
$ p^{(n)}\equiv p$ and subsequently strengthened to the stated form by Chayes \textit{et al.}
in~\cite{ChayesPemantlePeres_DirectedFractalPerc97}. The main idea of the proof is to extend the
percolation process to the half-strip $[0,1]\times[0,\infty)$ by ``stacking'' independent copies of
Mandelbrot percolation on top of each other and constructing a `contour' using discarded squares
which blocks any north-east oriented path starting from the bottom $[0,1]\times \{0\}$ that
``escapes'' to the top. More formally, a sequence of discarded squares $R_1,\ldots, R_M$ form a
contour over $[0,1]$ if 
\begin{enumerate}
\item the right edge of $R_i$ lies on the same (vertical) line as the left edge of $R_{i+1}$ for
  each $i=1,\ldots, M-1$; 
\item the left edge of $R_1$ is on the line $x=0$ while the right edge of $R_M$ is on the line
  $x=1$;
\item the top of $R_{i+1}$ is not strictly below the (horizontal) line determined by the bottom of
  $R_i$ for each $i=1,\ldots, M-1$.
\end{enumerate} 
The first two items ensure that every vertical line $\{x\}\times[0,\infty)$ is blocked by some $R_i$
while the last one prevents a north-east oriented path escaping. The height of the contour is simply
the maximum height of the bottoms of the squares in the contour. The key technical
observation~\cite[Proposition~2.1]{ChayesPemantlePeres_DirectedFractalPerc97} is that the expected
value of the height of a contour gets arbitrarily small (\ie close to a horizontal line) as a
contour uses smaller and smaller squares from higher levels. Using this, a counting argument
precludes nearly horizontal directed crossings. 

The absence of directed crossings and statistical self-similarity strongly suggest that a
percolating path has dimension strictly larger than one. Indeed, one formulation of the result of
Chayes~\cite{Chayes_BoxDimCrossingLowerBound96} is that the minimum number of level $k$ retained
squares required to connect $L$ with $R$ scales as $N^{k(1+\zeta)}$ for some $\zeta>0$ if and only
if directed percolation is not possible. Hence, it follows from~\cref{thm:OrientedPerc} that the
lower box dimension of any percolating path must be strictly bigger than one, although no dependence
of $\zeta$ on $p$ was exhibited. Later Orzechowski~\cite{Orzechowski_LowerBoundDimBPercolatingPath}
with a much more direct approach gave an explicit lower bound. Namely, there exists a constant
$v(N)>0$ such that almost surely every percolating path $\gamma$ in $F$ satisfies
\begin{equation*}
\underline{\dim}_{\mathrm{B}} \gamma \geq 1+ v(N)\cdot \frac{(1-p)^4}{|\log (1-p) |^3}.
\end{equation*}
This bound tends to $1$ as $p\to1$. An immediate corollary of this is that almost surely, no
percolating path can be $\alpha$-H\"older continuous for any $\alpha> \big( 1+ v(N)\cdot
(1-p)^4|\log (1-p) |^{-3} \big)^{-1}$. In particular, $F$ does not contain any rectifiable curves.
Orzechowski~\cite{Orzechowski_PhDThesis} also showed that for $N\geq 3$ and $p\geq 1-M^{-5}/15$
almost surely there exists a percolating path in $F$ with upper box dimension bounded from above by
an explicit function which tends to 1 as $p\to 1$. 

The limit set $F$ does not contain $\alpha$-H\"older percolating paths for $\alpha$ close to one,
but Broman \textit{et al.} \cite[Theorem 1.4 \&
1.5]{BromanCamiaJoostenMeester_DimConnectedComponent} showed using the sophisticated machinery of
Aizenman and Burchard~\cite{AizenmanBurchard_RndCurves99} that in the plane $F^c$ is almost surely
the union of non-trivial H\"older continuous curves, all with the same exponent. Another consequence
of their approach is that all these continuous curves have Hausdorff dimension strictly larger than
1. 

These result were recently supplemented by Buczolich \textit{et al.}
\cite{BuczoEtal_FractalPercUnrectifiable} who considered a notion of fractional rectifiability.
Given $0<\alpha\leq 1$, a set $E\subset \mathbb{R}^d$ is purely $\alpha$-unrectifiable if
$\mathcal{H}^{\frac{1}{\alpha}}(E\cap \gamma([0,1]))=0$ for all $\alpha$-H\"older curves
$\gamma:[0,1]\to\mathbb{R}^d$. Their main result~\cite[Theorem
6.13]{BuczoEtal_FractalPercUnrectifiable} is that in arbitrary dimension $d\geq 2$, for every $0\leq
p<1$ and $N\geq 2$, there exists $\alpha_0<1$ such that almost surely the limit set of Mandelbrot
percolation is purely $\alpha$-unrectifiable for all $\alpha_0<\alpha\leq 1$. In particular, this
also implies that $F$ is almost surely purely $k$-unrectifiable for $k\in\mathbb{N}\setminus\{0\}$,
\ie $\mathcal{H}^k(M\cap F) =0$ for all $k$-dimensional $C^1$-submanifolds $M\subset \mathbb{R}^d$.

\section{Metric geometry of fractal percolation}
\label{sec:Dust}

\subsection{Projections, Slices}\label{sec:projections}
Of particular interest in geometric measure theory are the sizes of projections and slices of sets and
measures.
A fundamental classical result is the Marstrand projection theorem \cite{Marstrand54} in $\mathbb{R}^2$,
which states that almost every orthogonal projection of a set with Hausdorff dimension $s>1$ has positive
Lebesgue measure. In general, almost every projection of a set with Hausdorff dimension $s>0$ has
Hausdorff dimension $\min\{1,s\}$, with similar results holding in higher dimensions. 
Finding conditions on which these results can be improved is a very active research area and beyond
the scope of this survey.
We will just briefly mention that Mandelbrot and fractal percolation are models under which
Marstrand's projection theorem can be significantly improved. We refer the reader to the recent
survey by  Orgov\'anyi and Simon~\cite{OrgovanyiSimon_Survey25} that covers questions on the
projections of stochastically self-similar processes in detail as well as the surveys
\cite{RamsSimon_FractalPercSurvey14,SimonVago_FractalPercSurvey18,FalconerJinSurvey}.

A sampling of results on projections of percolation is listed below.
\begin{itemize}
  \item Sharp Marstrand projection: For Mandelbrot percolation in the plane, Rams and Simon
    \cite{RamsSimon_ProjFractalPerc_JStatPhys14,RamsSimon_ProjFractalPerc_ETDS15} show that if
    $\dimh F <1$, then almost surely the orthogonal projections onto \emph{all} lines have the same
    dimension as $F$, and if $\dimh F>1$, then for \emph{any} smooth real valued function $f$ that
    is strictly increasing in both coordinates, the image $f(E)$ contains an interval. This is, in
    particular, true for any orthogonal projections that are not the principal directions.

\item Fractal percolation: Dekking and Meester \cite{DekkingMeester_StructureFractalPerc} considered
  random fractal percolation of the Sierpi\'nski carpet and showed that there are six possible
    phases of connectedness. These phases of connectedness do not appear in this form for 
    Mandelbrot percolation. See also \cite[Section 3a \& Theorem 10]{Chayes_FGSSurvey95}.

\item Maximal slices: Recently, Shmerkin and Suomala \cite{ShmerkinSuomala_LargestSliceFractalPerc} studied the
  question of when Mandelbrot percolation of the square contains $k\in\bbN$ colinear points. In particular, they
  find that for $p\leq 2^{(-k-2)/k}$, the percolation set $F$ does not contain $k$ colinear points
  almost surely, but if $p> 2^{(-k-2)/k}$ then, almost surely and conditioned on non-extinction,
  there exists a line $\ell$ such that $\#(\ell\cap F) \geq k$. They answer a question posed in
  their earlier article \cite{ShmerkinSuomala_Patterns20} which investigates patterns in random
  constructions, including percolation. 

\end{itemize}

\subsection{Visible parts of percolation}
Related to projections are the \emph{visible parts} of a set $X$. Let $X\subset \bbR^d$ be compact
and let $\ell$ be a line that does not intersect $X$. 
The visible part of $X$ from $\ell$ are all the points in $X$ that can be ``seen'' from $\ell$, 
\ie the orthogonal projections onto the line from $x\in X$ do not hit any other point in $X$.
Formally, let $\Pi_\ell(x)$ be the orthogonal projection of $x$ onto $\ell$ and let $[x,y]$ be the
line segment between $x,y\in \bbR^d$. The visible set of $X$ from $\ell$ is
\[
  V_{\ell}(X) = \{x\in X : [x,\Pi_{\ell}(x)]\cap X = \{x\}\}.
\]
Similar to the Marstrand projection theorem, we expect a stronger statement for Mandelbrot
percolation to hold. In fact, Arhosalo \textit{et al.}~\cite{ArhosaloJ2RamsShmerkin_VisiblePartsFractalPerc}
proved in $\bbR^2$, for $N=2$ and $p$ sufficiently large that $\dimh F > 1$, \ie $p>\tfrac12$,
that
\[
  \dimh V_\ell(F) = \dimb V_{\ell}(F) = 1
\]
almost surely, conditioned on non-extinction.
Moreover, $\cH^1(F) \in (0,\infty)$ almost surely, conditioned on non-extinction.

\subsection{Porosity}
A separate view of studying the geometry of sets is through its porosity.
We consider a set $X$ as a subset of an ambient space, say $\bbR^d$ and denote the complement of a
set $X$ in this space by $\overline{X}$. The porosity of $X$ at $x$ of
scale $r>0$, written $\por(X,x,r)$ is the size of the maximal `hole':
\[
  \por(X,x,r) = \sup\left\{\rho>0 : \exists y\in \overline{X}\text{ such that }B(y,\rho)\subseteq
  B(x,r)\cap \overline{X}\right\}.
\]
This quantity can take any value in $[0,\tfrac12]$ and we are interested in the `overall porosity'
of points in $X$. To achieve this one can consider the upper and lower porosity of $X$ at $x$,
defined by
\[
  \underline{\por}(X,x) = \inf_{r>0}\por(X,x,r)
  \qquad\text{and}\qquad
  \overline{\por}(X,x) = \sup_{r>0}\por(X,x,r).
\]
The study of porosity is a very active field in metric geometry and we refer the reader to
\cite{ZajicekSurveyOld, ZajicekSurveyNew,ShmerkinPorosity} for 
surveys on porosity in general. 
Unsurprisingly, there are many connections between the dimension theory of sets and its porosity. A
particular example is that the existence of a point $x\in X\subseteq\bbR^d$ with lower porosity $0$
implies that the Assouad dimension of $X$ is maximal, \ie is $d$.
The porosity of Mandelbrot percolation has been studied, amongst others in 
Berlinkov--J\"arvenp\"a\"a \cite{BerlinkovJarvenpaa_FractalPercPorosity}
and
Chen et al. \cite{ChenOjalaRossiSuomala_FractalPercPorosity}.
In particular, Mandelbrot percolation $F$ has upper and lower porosity $\tfrac12$ and $0$, almost
surely conditioned on non-extinction, at almost every point in $F$.
In particular, the points for which this is not the case have strictly smaller Hausdorff dimension
$s$ of the full percolated set $F$:
\[
  \dimh\{x\in F : \underline{\por}(F,x) > \eps\} < s-\delta(\eps)
  \quad\text{and}\quad
  \dimh\{x\in F : \overline{\por}(F,x) < \tfrac12 - \eps\} < s-\delta(\eps).
\]
It would be interesting to see whether there are notions that work in tandem with interpolating
dimensions and whether the gauge functions necessary there agree with those in the study of the
intermediate and Assouad spectrum.

\subsection{Minkowski content and other curvatures}
\label{sec:Minkowski}
Another approach in studying the geometry of sets in an ambient space is by investigating the size
of its $\eps$ neighbourhood as $\eps\to 0$.
For compact subsets of $\bbR^d$ this is usually achieved by measuring the decay of the Lebesgue
measure. We write
\[
  \cM^s(X) = \limsup_{\eps\to 0} \eps^{s-d} \cL^d([X]_{\eps})
\]
for the (upper) $s$-Minkowski content, where $[X]_\eps=\{y\in \bbR^d : \inf_{x\in X}|x-y|\le\eps\}$
is the closed $\eps$-neighbourhood of $X$.
Similarly, one can define the lower Minkowski content and \textit{the}
Minkoswki content if both agree. 
It is worth pointing out here that there exists a critical threshold $s_0$ from which the content
jumps from $0$ to $\infty$. This threshold coincides with the upper and lower box-counting dimension
and is responsible for their alternative name, the Minkowski dimension.

Analysing Mandelbrot percolation, one may assume that the Minkowski content at the right exponent is
zero, as is the case for the Hausdorff measure. However, that is not the case. Let $s = \dimh F$. A martingale argument
shows that the number of covering sets of size $r$, suitably resized, converges to a random number
$r^{s}N_r(F) \to C$ that is positive and finite, conditioned on non-extinction.
This already shows that the upper and lower Minkowski contents are positive and finite when relating
the cover to $r$-neighbourhoods. A more sophisticated renewal argument shows that the Minkowski
content almost surely exists. This is also true for self-similar fractal percolation under some
additional mild conditions \cite{Gatzouras99_Lacunarity}.
Introducing some averaging with the Minkowski content by setting
\[
  \overline{\cM}^s(X) = \lim_{\delta\to 0} \frac{1}{|\log\delta|}\int_{\delta}^1
  \eps^{s-d}\cL^d([X]_\eps)\frac{1}{\eps}d\eps
\]
these issues are resolved and the average Minkowski content exists and is positive and finite for
all stochastically self-similar sets. Surprisingly, this does not hold when the model is changed to
a skew system, or $1$-variable system, see \cref{sec:vvar}.

Extensions to the Minkowski content to more general curvature measures have also been considered,
and we refer the reader to \cite{Winter08_CurvatureMeasures} and \cite{Zahle11_Curvature} and
references therein.

\subsection{Intersection with Brownian motion}
Recall that Brownian motion $W_t$ $(t\geq 0)$ in $\bbR^d$ has Hausdorff dimension $2$ for all $d\geq
2$. See \cite{Taylor1953} for the original proof, or \cite[Theorem 16.5]{FalconerFractalGeometry}
for a modern exposition.
Now consider Mandelbrot percolation in $\bbR^d$ for $d\geq3$ and let $p$ be such that $p= N^{2-d}$. In
particular, the dimension of $F$ is $2$ almost surely, conditioned on non-extinctions and $F$ is
totally disconnected for large $d$ and as such dust-like. 
However, Peres~\cite{Peres_BrownianPathIntersectFractalPerc} showed that the percolation set and Brownian motion
$W_t$
are intersection equivalent, even if the dust and Brownian motion have entirely different
connectivity (and topological) properties. That is, he showed that
for all open sets $O\subseteq[0,1]^d$, 
the probability $\bbP(F\cap O=\varnothing)$ of dust intersecting $O$ is positive if and only if 
$\bbP(\{W_t : t>0\}\cap O \neq \varnothing)>0$.

Other such fascinating intersection results exist and from a number theoretical perspective,
Mandelbrot percolation, or fractal percolation in general, is
large and intersects badly approximable points. In fact, the set intersects any hyperplane winning
set, see \cite{Dayan_GWFractalWinningSets22}.

\section{Generalisations}\label{sec:Generalisations}

The fractal percolation process can be generalised in numerous directions. We give just a small
sample here that is far from being complete.

\subsection{Weakening spatial independence}
A strong assumption that is key to the study of Mandelbrot and fractal percolation is the spatial
independence of different subsets. There are various ways of weakening that assumption, one of them
being the introduction of a ``finite variety'' of arrangements that can be observed at a specific
construction level. Depending on the question these models can often be studied using (multitype)
branching processes in random environments. 

\subsubsection{Random homogeneous model or the $1$-variable model}\label{sec:vvar}
Consider, as in the Mandelbrot percolation case the subdivision of a cube into $N^d$ congruent
subcubes. Instead of independently deciding at construction level $n$ how the surviving subcubes
percolate, we decide the subcube structure once and replicate these subsets in all subcubes of that
level. In the language introduced earlier, we have $X_{\bi} = X_{\bj}$ for all $\bi,\bj\in\Sigma_k$
whenever $i_k=j_k$. This model is sometimes called the \emph{random homogeneous model} or the $1$-\emph{variable
model}, since there is only $1$ possible arrangement of subcubes at every construction level.
This specific model has attracted a lot of attention with the dimension theory of general
self-similar fractal percolation being studied in \cite{Troscheit_DimHBRNDGraphDirectedSelfSim}.
The results further extends to graph directed constructions where $\Sigma_k$ is replaced by a
subshift of finite type, \ie there exist forbidden transitions.
The self-affine dimension theory of these constructions was also studied in
\cite{GuiLi08_randomMcMullen} for random Bedford-McMullen carpets and
\cite{Troscheit_DimBRndBoxLikeSelfAffine} for more general carpets. Fraser and Olsen
\cite{FraserOlsen11_MultifractalSpectra} further studied the multifractal spectra of random
Sierpi\'nski sponges. We also refer the reader to \cite{Olsen_Book} for more general random
self-similar graph directed sets.

\subsubsection{$V$-variable model or random code trees with necks}
The terminology of $1$-variable random sets arises as it is a special case of the $V$-\emph{variable model}
introduced by Barnsley, Hutchinson, and Stenflo~\cite{BarnsleyHutchinsonStenflo2012}. Here, the
construction mechanisms allows for up to $V\in\bbN$ distinct subtree constructions, with the
stochastically-self similar case being $V=\infty$, where there are no restrictions on the number of
sub-arrangements.
The $V$-variable construction is in and of itself a special case of \emph{random code trees with necks},
see~\cite{Jarvenpaaetal14_codetrees}, where the self-affine dimension theory was investigated. The model imposes no
restrictions on the number of sub-arrangements at every construction level, but requires there to be
a distribution such that there are infinitely many construction levels at which every subtree is
identical and such that this distribution is invariant under a shift from these ``neck'' levels.
Interestingly, this defining mechanism is enough for there to be no gauge function such that the
random sets have positive but finite Hausdorff measure \cite{Troscheit2021_CodeTrees}. Further, the
Minkowski content $\cM$ and the averaged Minkowski content $\overline{\cM}$ (see
\cref{sec:Minkowski}) also do not exist
\cite{Troscheit23-Minkowski} almost surely.

\subsubsection{Random substitutions}\label{sec:RndSubstitutions}

Let $\mathbf{q}$ be a probability distribution on the subsets of the index set $\Sigma_1$. If
$\pi(\bi)\subset F_n$ for some $\bi\in\Sigma_n$, then instead of choosing independently for each
$\pi(\bi j)$ whether to keep it or not, choose the whole configuration $\{\pi(\bi j):\,
j\in\Sigma_1\}$ according to $\mathbf{q}$ and repeat independently for each $\pi(\bi)\subset F_n$.
This is a \emph{random substitution} system which clearly generalises fractal percolation and was
considered in for
example~\cite{DekkingMeester_StructureFractalPerc,Falconer1986_RandomFractals,
FalconerGrimmet_FractalPerc_92,GatzourasLalley_DimHBStatSelfAffineBMCarpet}.

\subsubsection{Overlapping IFSs}
In all the previous examples it was assumed that $f_{i}\big((0,1)^d\big)\cap
f_{j}\big((0,1)^d\big)=\varnothing$ for all $i\neq j\in\Sigma_1$, \ie the IFS satisfies the
so-called \emph{open set condition}. There are several ways to weaken this condition and introduce
overlaps into the system. This is an extensively studied area in the deterministic setting, however,
apart from a preprint of Orgov\'anyi and Simon~\cite{OrgovanyiSimon_InteriorPointsLebesgueMeasure},
we are unaware of any further works in this direction in the fractal percolation setting. They show
the existence of attractors which have positive Lebesgue measure, but empty interior almost surely
conditioned on the attractor not being empty.  
\vspace{2mm}

\noindent Finally, we mention that Dekking and Wal~\cite{DekkingWal_FractalPercCellularAutomata} 
introduced branching cellular automata to allow for neighbour interaction in Mandelbrot percolation.

\subsection{Cascade measures and random geometry}\label{sec:cascade}
In this survey we mostly focussed on the dimension theory of sets arising from a fractal percolation
process. Similar questions can be asked for measures as well. Consider the basic construction of
Mandelbrot percolation, but instead of removing with probability $1-p$, we associate to each coding an
independent identically distributed random weight $X_v\geq 0$ that satisfies $\mathbb{E}(X_v)=1$ and
construct a measure on cubes by setting 
\[
  \mu_n (Q) =
  X_{\emptyset}\cdot X_{i_1}\cdot X_{i_2} \dots X_{i_k} \cdot \mathcal{L}^d(Q)
\]
for cubes $Q$ of level $k\le n$ with coding $i_1\dots i_k$, where $\mathcal{L}^d$ is the
$d$-dimensional Lebesgue measure.
Under mild assumptions on the random variable $X$, the weak limit $\mu = \lim_{n\to\infty} \mu_n$
exists as the measure has a martingale structure. The limiting measure $\mu$ is then a positive and
finite measure, called the multiplicative cascade measure. 

This measure is the natural measure analogue to Mandelbrot percolation and much research has gone
into studying its measure-theoretic and dimension theoretic properties. Further, they have been used
as simple models that generate random geometry in one dimension.
We refer the reader to
\cite{BarralFeng_ProjPlanarMandelbrotMeasure,BarralJin_MultiplicativeCascades_IMRN22,BarralSeuret07_Renewal,
FalconerJin14,HamblyJones03_ThickThinPoints,PatschkeZahle90_RandMeasure,PeresRams_ProjNatMeasureFractalPerc16} for
more background on the dimension theory of the measures and its projections.
For their use in one-dimensional random
geometry and links of the KPZ equation and dimension of images under such random geometries, we
refer the reader to~\cite{BarralEtAl14,BenjaminiSchramm09,FalconerTroscheit23}.

\subsection{Spatially independent martingales}\label{sec:generalProcesses}
A detailed
study of ``fractal'' and stochastically self-similar measures was studied in
\cite{ShmerkinSuomala_SIMartingales} by Shmerkin and Suomala, who coined the term
\emph{spatially independent martingales} for such constructions. They investigated many structures
found in these random measures and their survey \cite{ShmerkinSuomala_FARFSurvey17} summarises their
main ideas. 
In summary they
\begin{itemize}
  \item Establish strong projection results for random sets and measures, \cf
    \cref{sec:projections}.,
  \item prove strong dimension preservation laws on intersections and slices,
  \item give smoothness results on convolutions of such random measures,
  \item deduce results on sumsets,
  \item and obtain Fourier decay results.
\end{itemize}
Most of these results are beyond the scope of this survey, but Shmerkin and Suomala show that many
results on fractal percolation can be translated into much more general frameworks of stochastically
self-similar results.


\section{Open Problems}\label{sec:OPenProblems}

Despite the simplicity of the model, Mandelbrot percolation has proven to possess a myriad of
interesting properties. Plenty of open problems remain to guarantee that it remains an active area
of research for the foreseeable future. 

\subsection{Mandelbrot percolation}
Many open questions remain, even for the original Mandelbrot percolation setting and have been open
for some time. 
The exact critical threshold is unknown:
\begin{question}\label{ques:crit}
What is the critical percolation threshold $p_c$ for Mandelbrot percolation with parameters $N,d$?
\end{question}

What is more, it still remains unknown in general whether the critical percolation threshold is the
same as the threshold for the dust phase:
\begin{conjecture}\label{conj:dustconnected}
  For Mandelbrot percolation with parameters $N\geq 2$ and $d\geq 1$, we have $p_d=p_c$.
\end{conjecture}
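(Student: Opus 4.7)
The inequality $p_d \leq p_c$ is trivial, since if $F$ is totally disconnected almost surely then it cannot contain a left--right crossing. The plan is therefore to prove the reverse inequality: whenever $\mathbb{P}_p(F^c \neq \varnothing) > 0$ we must have $\theta(p) > 0$. The guiding philosophy is that statistical self-similarity should bootstrap the mere existence of a non-trivial connected component at some small scale into a macroscopic percolating path: inside each retained level-$n$ subcube one has an independent rescaled copy, producing infinitely many independent ``seeds'' out of which a connecting structure ought to be assembled.

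The first step is to quantify the existence of connected components. Define $q(\delta) = \mathbb{P}_p(\operatorname{diam}(F^c) \geq \delta)$, which, once $p > p_d$, is strictly positive for some $\delta_0 > 0$. One then sets up a renormalization scheme: at level $n$ declare a subcube ``good'' if its rescaled copy contains a component realising the shell-crossing event of Broman--Camia (whose probability $\varphi_{N,d}(p)$ appears in the proof sketch of \cref{thm:DimConnComp}), and attempt to concatenate chains of good subcubes from $L$ to $R$ using spatial independence. In the planar case this is exactly the strategy carried out by Chayes--Chayes--Durrett, combining Russo--Seymour--Welsh box-crossing estimates with the planar duality principle that a failure of left--right percolation in $F$ forces a top--bottom dual blocking curve, which in turn precludes connected components of diameter $\geq 1/2$ and, via self-similarity, of any diameter.

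The hard part, already isolated in the literature, is that for $d \geq 3$ there is no such duality, and the concatenation step demands a lower bound on $\varphi_{N,d}(p)$ uniformly for $p > p_d$. Broman and Camia obtained this only for $N > N_0(d)$, by coupling with a discrete site percolation model whose critical threshold $p_c(N)$ approaches in the large-$N$ limit. To extend the result to all $N \geq 2$ one would presumably need either a sharp-threshold ingredient in the spirit of Friedgut--Kalai adapted to the multiscale setting, or a genuinely new geometric substitute for planar duality; a possible route would combine the sheet-percolation framework of Orzechowski with an induction on $d$, using $p_c(N) < p_s(N)$ to compare crossings of $F$ and obstructions in its complement. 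However, the apparently weaker monotonicity $p_c(N+1) \leq p_c(N)$ is itself open and in essence captures the same difficulty, which strongly suggests that any successful approach will require a substantially new idea rather than a refinement of existing techniques.
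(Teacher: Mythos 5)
This statement is a \emph{conjecture} in the paper; no proof is given, and it is explicitly listed in \cref{sec:OPenProblems} as an open problem. You have correctly recognised this: your ``proposal'' does not claim to close the gap, and it accurately summarises the state of the art. The trivial inequality $p_d \leq p_c$, the resolution of the planar case via
\cite{Chayes2Durrett_FractalPercConnectivity_PTRF88,DekkingMeester_StructureFractalPerc,Meester_ConnectivityFractalPerc} using RSW-type estimates and planar duality, and the partial higher-dimensional result of Broman--Camia \cite{BromanCamia_UniversalConnectivityFractalPercModels} for $d\geq 3$ and $N > N_0(d)$ are all stated correctly, and your identification of the lack of planar duality in $d \geq 3$ as the key obstruction matches the discussion surrounding \cref{thm:basicConn}. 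Your concluding assessment---that the difficulty is of the same nature as the still-open monotonicity $p_c(N+1)\leq p_c(N)$ and likely requires a genuinely new idea---is sound and consistent with the paper's framing.

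One small caution: the speculative route via Orzechowski's sheet percolation \cite{Orzechowski_SheetPercInFractalPerc} combined with induction on $d$ is not obviously a plausible path, since $p_c(N) < p_s(N)$ only holds for $N$ large, i.e.\ precisely the regime already covered by Broman--Camia, so it does not clearly help for small $N$. Similarly, invoking a Friedgut--Kalai sharp-threshold argument is not straightforward here, because the relevant events live on an infinite multiscale product space and the usual influence machinery does not transfer directly. Neither of these is a gap in a proof---you are not claiming one exists---but if you were to pursue these directions you would need to be explicit about why the cited tools apply outside the regimes in which they were originally established.

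Summary: the statement is an open conjecture with no proof in the paper, and your write-up correctly identifies it as such rather than offering a false proof.
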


Varying the probabilities in the construction of fat percolation, the statement of
\cref{thm:FatPerc} gives two possibilities. It is unknown whether both cases can occur and there is
a dichotomy, or whether just one of them holds:
\begin{question}\label{ques:fatperc}
Is it possible that $\mathcal{L}(F^d)>0$ and $\mathcal{L}(F^c)=0$ for fat Mandelbrot percolation?
\end{question}

The finer dimension theoretic analysis of the connected components
through intermediate dimensions and spectra, further opens up
the question at which exact threshold function one sees the interpolation. We ask:
\begin{question}\label{ques:theta}
What family of functions $\{\Phi_s:\, s\in[\dim_{\mathrm{H}}F^c,\overline{\dim}_{\mathrm{B}}F^c]\}$
recover the full interpolation between $\dim_{\mathrm{H}}F^c$ and $\overline{\dim}_{\mathrm{B}}F^c$?
\end{question}
In particular, studying the intermediate dimension with coverings bounded between size $\delta$ and
$\Phi(\delta) = \delta^{1+\phi(\delta)}$, we ask which function $\phi$ observes the interpolation and whether it coincides
with the family of functions interpolating the Assouad spectrum 
\(
  \psi(R) = \frac{\log |\log R|}{|\log R|}.
\)
In fact, given that the estimates in \cref{thm:DimIntermediateSpectrumConnComp} use similar methods as used for the Assouad spectrum
interpolation in \cite{BanajiRutarTroscheit24}, we conjecture that 
\begin{conjecture}\label{conj:dimfunct}
  There exists a monotone function $\alpha(s)\colon[0,1]\to[0,\infty)$ such that 
  \[
    \{\dim^{\Phi} F^c \colon \Phi(\delta) = \delta^{1+\alpha(s)\psi(\delta)}\} =
    [\dim_{\mathrm{H}}F^c,\overline{\dim}_{\mathrm{B}}F^c].
  \]
\end{conjecture}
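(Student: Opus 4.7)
The plan is to mirror the Assouad spectrum interpolation of~\cite{BanajiRutarTroscheit24} and combine it with the shell-cover technique of~\cite{BromanCamiaJoostenMeester_DimConnectedComponent} used in \cref{thm:DimConnComp} and \cref{thm:DimIntermediateSpectrumConnComp}. The appearance of the threshold function $\psi(R) = \log|\log R|/|\log R|$ in both the Assouad spectrum transition and this conjecture strongly suggests that the same polylogarithmic accumulation of independent shell-failure events governs both interpolations, with $\alpha(s)$ arising as the value balancing exponentially many retained cubes at each level against polylogarithmically many shell-crossing failures.

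\textbf{Upper bound.} For $\alpha > 0$ and $\Phi(\delta) = \delta \cdot |\log\delta|^{-\alpha}$, I would construct a hierarchical multi-scale cover of $F^c$. At scale $\delta = N^{-k}$, partition alive cubes at intermediate levels $m \in [k, k + \alpha\log k/\log N]$ according to the smallest $m$ at which the shell-crossing condition $\mathcal{W}_m$ of \cref{thm:DimConnComp} first fails, and cover each such cube at its level-$m$ scale $N^{-m} \in [\Phi(\delta), \delta]$. Fully-shelled cubes $\bj \in \mathcal{V}_n$ at $n = k + \alpha\log k/\log N$ are covered at the minimal allowed scale $\Phi(\delta)$. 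Using the independence of shell events across disjoint annuli, the $s$-cost is expressed as a sum over $m$ of terms of order $(pN^d)^m \varphi_{N,d}(p)^{m-k}(1-\varphi_{N,d}(p))\,N^{-sm}$ plus the contribution from $\mathcal{V}_n$, which can be optimised in $s$ as a function of $\alpha$.

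\textbf{Lower bound.} Adapt the mass distribution argument of \cref{thm:DimIntermediateSpectrumConnComp} by placing atoms on cubes whose shell-failure pattern has the target density within the polylogarithmic window $[k, k + \alpha\log k/\log N]$. A Borel--Cantelli argument for Galton--Watson trees, analogous to \cite[Lemma 3.6]{BanajiRutarTroscheit24} applied at the rate $n(k) = \alpha\log k/\log N$, should produce such cubes almost surely conditioned on non-extinction, and the mass distribution principle then yields a matching lower bound. Monotonicity of the $\Phi$-intermediate dimension in $\Phi$ translates to monotonicity of $\alpha(s)$, and continuity of the upper/lower sandwich would give the full interval.

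\textbf{Main obstacle.} The crucial difficulty is that a naive implementation of the cover above recovers only the box-dimension bound, because covering the clusters of tiny components requires approximately $\delta^{-\dimb F^c}$ cover cubes regardless of the permitted scale window --- a direct reflection of the equality $\dimb F^c = \dimb F$ from \cref{thm:DimConnComp}. Extracting a strict interpolation requires exploiting a finer hierarchical structure of the tiny components themselves: within each cube whose shell crossing first fails at level $m$, the remaining $F^c$-structure should again admit a shell-cover decomposition with further recursive savings. Since $\varphi_{N,d}(p)$ admits no closed form, $\alpha(s)$ can likely only be characterised implicitly, and the conjecture is tightly linked to the open problem of optimal Hölder exponents of percolating paths discussed earlier in \cref{sec:ConnectedComp}, making it plausibly of comparable difficulty.
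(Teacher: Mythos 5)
The statement you are trying to prove is explicitly posed as a \emph{conjecture} in the paper; the paper contains no proof of it, so there is nothing to compare your argument against on that front. What the paper does provide is the motivation: \cref{thm:DimIntermediateSpectrumConnComp} shows that the $\theta$-intermediate dimension of $F^c$ jumps discontinuously from $\dimh F^c$ at $\theta=0$ to $\dimb F^c$ for all $\theta>0$, and the authors note (via Banaji's general result) that some family of $\Phi$-functions must realise the full interpolation; \cref{conj:dimfunct} speculates on the exact shape of that family. Your identification $\Phi(\delta)=\delta\cdot|\log\delta|^{-\alpha}$ is the correct unpacking of $\delta^{1+\alpha\psi(\delta)}$, and the analogy with the Assouad-spectrum window in \cite{BanajiRutarTroscheit24} is precisely the heuristic the authors invoke, so the roadmap is aimed in the right direction.

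However, your proposal is a research programme, not a proof, and you say so yourself in the final paragraph. The obstacle you identify is a genuine one and is exactly where the argument would have to live: because $\dimb F^c=\dimb F$, a single pass of the shell-failure decomposition applied at the finest allowed scale $\Phi(\delta)$ produces $\approx\Phi(\delta)^{-\dimb F}$ pieces, which already matches the box-dimension cost and gives no improvement in the exponent. Getting a strict interpolation for nonconstant $\alpha$ requires controlling the recursive structure of the tiny connected components inside each shell-failure cube so that the savings compound across the polylogarithmic window, and neither this paper nor the cited literature provides the quantitative tools (in particular, $\varphi_{N,d}(p)$ is not explicit and $\beta(p)=\dimh F^c$ is unknown). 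You also need a matching lower bound, and the mass-distribution construction in \cref{thm:DimIntermediateSpectrumConnComp} was engineered to show the dimension is \emph{maximal}, not to produce intermediate values --- adapting it to hit a prescribed exponent strictly between $\dimh F^c$ and $\dimb F^c$ is a new argument, not a modification. In short: the key step is missing, not merely sketched, and the conjecture remains open.
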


\subsection{Planar fractal percolation}
Finally, we wish to promote one possible general direction. Roughly
speaking, how universal are facts about Mandelbrot percolation in the context fractal
percolation (as defined in~\cref{sec:generalConstr})? For the most part, papers going beyond the
self-similar IFS satisfying the open set condition have concentrated
on the dimension of the limiting set, but many further questions could be asked. In the
deterministic setting, such general IFSs (like self-affine) have exhibited numerous phenomena that the
self-similar setting does not.  

A few concrete questions could be the following:

\begin{question}\label{ques:falconerfeng}
  For $M\times N$ percolation, what is the relationship between the probabilities of a left-right
  crossings $\theta(p)$ and a top-bottom crossing $\vartheta(p)$?
\end{question}
\begin{question}
Does directed percolation exist for fractal percolation on the $M\times N$ grid?
\end{question} 
Moving away from the $M\times N$ grid, one
may consider general grid-line subdivisions. These correspond to stochastically self-affine sets.
While their dimension theory has, to a limited extend, been analysed, nothing is known about their
connectivity properties.
We ask:
\begin{question}
  Under what conditions are the critical thresholds for left-right and top-bottom percolation the
  same?
\end{question}
\begin{question}
  What is the minimal level of complexity necessary for the left-right and top-bottom percolation
  thresholds to be distinct?
\end{question}
In particular, is it necessary to construct fractal percolation on a proper
self-affine subset? 

\smallskip
For the dimension gap between the connected component and the dust, we ask whether there is the same
gap:
\begin{question} Let $F$ be the limit set of general self-affine percolation. Assume that $p$ is
  large enough for there to exist connected components. Is it true that 
  \[\hdim F^c < \bdim F^c =\bdim F^d?\]
  Is it also possible that
  \[
    \hdim F^c < \hdim F^d < \bdim F^d?
  \]
\end{question}

\bigskip
\subsection*{Acknowledgement}
IK's research is supported by the European Research Council Marie
Curie--Sk\l{}odowska Personal Fellowship \#101109013, the Hungarian NRDI Office grant K142169 and
the J\'anos Bolyai Research Scholarship of the Hungarian Academy of Sciences. 

ST's research was initially supported by the
European Research Council Marie Curie--Sk\l{}odowska Personal Fellowship \#101064701. ST acknowledges
financial support from the Lisa \& Carl--Gustav Esseen's fund for mathematics (2025).

\printbibliography

\end{document}